\newtheorem{theorem}{Theorem}[section]
\newtheorem{lemma}[theorem]{Lemma}
\newtheorem{definition}[theorem]{Definition}
\newtheorem{example}[theorem]{Example}
\newtheorem{proposition}[theorem]{Proposition}
\newtheorem{corollary}[theorem]{Corollary}
\newtheorem{problem}[theorem]{Problem}
\numberwithin{equation}{section}
\begin{document}

\title{Strict unimodality of $q$-polynomials of rooted trees}

\author{Zhiyun Cheng}
\address{School of Mathematical Sciences, Laboratory of Mathematics and Complex Systems, Beijing Normal University, Beijing 100875, China
\newline
\indent and
\newline
\indent Department of Mathematics, The George Washington University, Washington, DC 20052, U.S.A.}%
\email{czy@bnu.edu.cn}

\author{Sujoy Mukherjee}
\address{Department of Mathematics, The George Washington University, Washington DC, USA}
\email{sujoymukherjee@gwu.edu}

\author{J\'{o}zef H. Przytycki}
\address{Department of Mathematics, The George Washington University, Washington DC, USA, and University of Gda\'nsk}
\email{przytyck@gwu.edu}

\author{Xiao Wang}
\address{Department of Mathematics, The George Washington University, Washington DC, USA}
\email{wangxiao@gwu.edu}

\author{Seung Yeop Yang}
\address{Department of Mathematics, The George Washington University, Washington DC, USA}
\email{syyang@gwu.edu}

\subjclass[2000]{Primary 54C40, 14E20; Secondary 46E25, 20C20}
\date{January 13, 2016}
\keywords{Plucking polynomial, unimodal sequence, rooted tree, Gaussian polynomial, $q$-binomial coefficients}

\begin{abstract}
We classify rooted trees which have strictly unimodal $q$-polynomials (plucking polynomial).
We also give criteria for a trapezoidal shape of a plucking polynomial.
We generalize results of Pak and Panova on strict unimodality of $q$-binomial coefficients.
We discuss which polynomials can be realized as plucking polynomials and whether or not different rooted trees can have the same plucking polynomial.
\end{abstract}

\maketitle

\tableofcontents
\section{Introduction}

We study in this paper properties of coefficients of the $q$-polynomial invariant of rooted trees. This invariant is defined, initially,
for plane rooted trees using the recursive plucking recursive relation as follows \cite{Prz-1,Prz-2,Prz-3}.
In our work, we use the convention that trees are growing up (like in Figure $1.1$).
\begin{definition}\label{Definition 1.1}
Consider the plane rooted tree $T$ (compare  Figure $1.1$). We associate with $T$ the polynomial
$Q(T,q)$ (or succinctly $Q(T)$) in the variable $q$ as follows.
\begin{enumerate}
\item[(i)] If $T$  is the one vertex tree, then $Q(T,q)=1$.
\item[(ii)] If $T$ has some edges (i.e. $|E(T)|>0$), then
$$Q(T,q)=\sum_{v \in \mbox{ leaves }}q^{r(T,v)}Q(T-v,q), $$
where the sum is taken over all leaves, i.e. vertices of degree $1$ (not a root), of $T$
 and $r(T,v)$ is the number of edges of $T$ to the right of the unique path connecting $v$ with the root
 (examples are given in Figure $1.1$).
\end{enumerate}
\end{definition}
We call the polynomial $Q(T)$ the {\it plucking polynomial} of $T$ motivated by the nature of the definition.

\begin{figure}[h]
\centerline{\psfig{figure=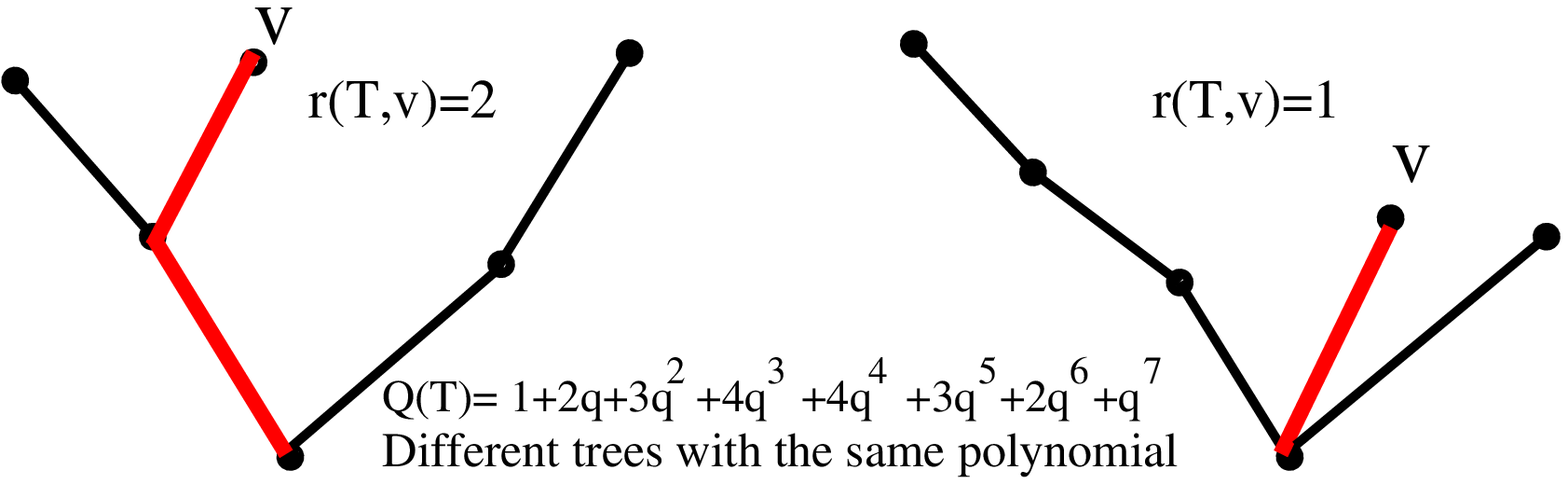,height=3.3cm}}
\ \\ \ \\
\centerline{ Figure $1.1$: Plane rooted trees and examples of $r(T,v)$}
\end{figure}

We proved in \cite{Prz-1,Prz-3} that $Q(T)$ is a rooted tree invariant (so it does not depend on the embedding in the plane).
It follows from the following result that we shall often use in the paper.

\begin{theorem}\cite{Prz-1,Prz-3}\label{Theorem 1.2}\
\begin{enumerate}
\item[(1)]
Let $T_1 \vee T_2$ be a wedge product of trees $T_1$ and $T_2$ ({\psfig{figure=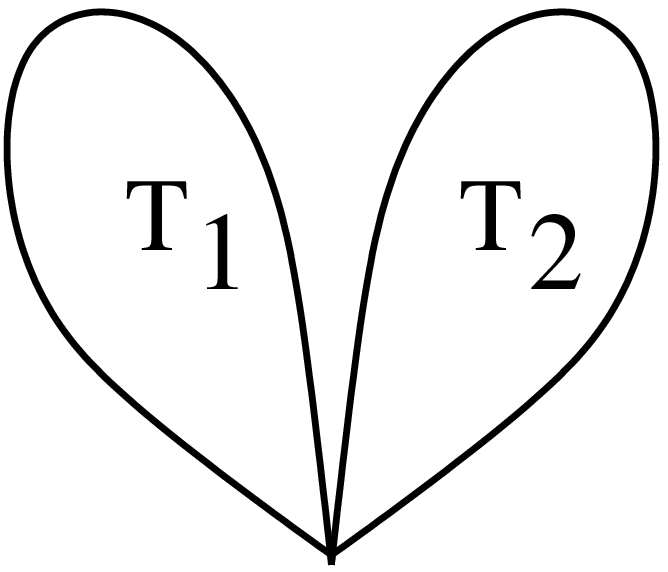,height=1.1cm}}). Then:
$$Q(T_1 \vee T_2)= \binom{|E(T_1)|+|E(T_2)|}{|E(T_1)|,|E(T_2)|}_qQ(T_1)Q(T_2)$$
\item[(2)]  Let a plane tree be a wedge product of $k$ trees (\psfig{figure=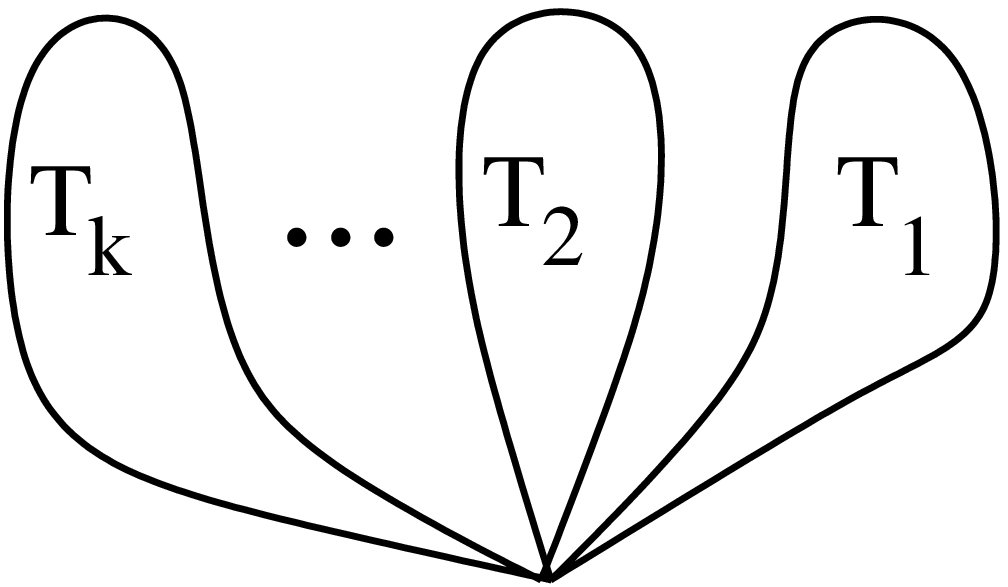,height=1.5cm}), that is
$T=T_{k} \vee \cdots \vee  T_2 \vee T_1,  \mbox{ then }$
$$Q(T)= \binom{E_k+E_{k-1}+...+E_1}{E_k,E_{k-1},...,E_1}_qQ(T_k)Q(T_{k-1})\cdots Q(T_1),$$
where $E_i=|E(T_i)|$ is the number of edges in $T_i$, and $q$-multinomial coefficients are defined by
$$\binom{E_k+E_{k-1}+...+E_1}{E_k,E_{k-1},...,E_1}_q=\frac{[E_k+...+E_2+E_1]_q!}{[E_k]_q!\cdots [E_2]_q!\ [E_1]_q!}.$$
Here $[k]_q=1+q+...+q^{k-1}$ and $[k]_q!= [k]_q[k-1]_q \cdots[2]_q[1]_q$ are called a $q$-integer and a $q$-factorial, respectively.\\
Notice that for every ordering of numbers $E_1,E_2,...,E_k$ we can decompose the $q$-multinomial coefficient
$\binom{E_k+E_{k-1}+...+E_1}{E_k,E_{k-1},...,E_1}_q$ into the product of Gaussian polynomials:
$${E_k+E_{k-1}+...+E_1 \choose E_k,E_{k-1}+...+E_k}_q \cdots{E_3+E_2+E_1\choose E_3,E_2+E_1}_q{E_2+E_1\choose E_2,E_1}_q.$$
\item[(3)] (State product formula)
$$ Q(T) = \prod_{v\in V(T)}W(v), $$
where $W(v)$ is the weight of a vertex (we can call it the Boltzmann weight) defined by:
$$W(v)= \binom{E(T^v)}{E(T^v_{k_v}),...,E(T^v_{1})}_q,$$
where $T^v$ is a subtree of $T$ with root $v$ (part of $T$ above $v$, in other words $T^v$ grows from $v$)
and $T^v$ may be decomposed into a wedge of trees as follows: $T^v= T^v_{k_v} \vee \cdots \vee  T^v_2 \vee T^v_{1}.$\\
Notice that by $(2)$, $Q(T)$ is a product of $q$-binomial coefficients.
\item[(4)] $Q(T)$ is of the form $c_0+c_1q+...+c_Nq^N$ where:\\
(i) $c_0=1=c_N$, $c_i>0$ for every $i\leq N$, \\
(ii) $c_i=c_{N-i}$ for each $i$, that is  $Q(T)$ is a symmetric polynomial (i.e. a palindromic polynomial),\\
(iii) the sequence $c_0,c_1,...,c_N$ is unimodal (see the next definition),\\
(iv) For a nontrivial tree $T$, that is a tree with at least one edge, we have:
$$c_1 = \sum_{v\in V(T)} (k_v-1),$$ where $k_v= deg_{T^v}(v)$ is the number of edges growing up from $v$, that is the degree of
$v$ in the tree $T^v$ growing from $v$. The number $c_1$ of $Q(T)$ is called the branching number of $T$.
\end{enumerate}
\end{theorem}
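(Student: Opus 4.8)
The plan is to establish the four parts in their natural logical order: part (1) is the computational heart, part (2) is obtained by iterating (1), part (3) is a recursive repackaging of (2), and part (4) reads off analytic properties of $Q(T)$ from the factorization into Gaussian polynomials supplied by (3).

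For part (1) I would induct on $|E(T_1)|+|E(T_2)|$. Write $a=|E(T_1)|$, $b=|E(T_2)|$ and assume $T_2$ is drawn to the right of $T_1$. Applying the plucking recursion of Definition 1.1 to $T=T_1\vee T_2$, each leaf $v$ of $T$ is a leaf of $T_1$ or of $T_2$, and $T-v$ is again a wedge, either $(T_1-v)\vee T_2$ or $T_1\vee(T_2-v)$. The one delicate point, and the main obstacle, is the bookkeeping of the exponent $r(T,v)$: for a leaf $v$ of the left factor all $b$ edges of $T_2$ lie to the right of the path from $v$ to the root, so $r(T,v)=r(T_1,v)+b$, whereas for a leaf of $T_2$ the edges of $T_1$ lie entirely to the left and contribute nothing, so $r(T,v)=r(T_2,v)$. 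Feeding the inductive formula into each summand and recognizing $\sum_v q^{r(T_i,v)}Q(T_i-v)=Q(T_i)$, the recursion collapses to
\[
Q(T)=\Bigl(q^{b}\binom{a+b-1}{a-1}_q+\binom{a+b-1}{a}_q\Bigr)Q(T_1)Q(T_2),
\]
and the parenthesized factor is exactly $\binom{a+b}{a}_q$ by the $q$-Pascal identity $\binom{n}{k}_q=q^{\,n-k}\binom{n-1}{k-1}_q+\binom{n-1}{k}_q$. The base case is a single-vertex factor, where the binomial degenerates to $1$. Note that the symmetry $\binom{a+b}{a}_q=\binom{a+b}{b}_q$ already reflects the independence of $Q$ from the plane embedding.

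Part (2) follows by induction on $k$, writing $T_k\vee\cdots\vee T_1=(T_k\vee\cdots\vee T_2)\vee T_1$ and applying (1) once; the telescoping product of binomials that results is exactly the stated decomposition of the $q$-multinomial, using the standard identity $\binom{n}{a_1,\dots,a_k}_q=\prod_i\binom{a_i+\cdots+a_k}{a_i}_q$. For part (3) I would induct on the tree: decompose $T$ as the wedge of its top branches, apply (2) at the root to split off the weight $W(\mathrm{root})$, and apply the inductive hypothesis to each branch; collecting weights over all vertices gives $Q(T)=\prod_v W(v)$.

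For part (4) I would use (3), which presents $Q(T)$ as a product of Gaussian polynomials, together with three classical facts about $\binom{n}{k}_q$: it has nonnegative integer coefficients with constant and leading coefficient $1$; it is palindromic of degree $k(n-k)$; and it is unimodal. Properties (i) and (ii) are then inherited by the product, since a product of palindromic polynomials with positive coefficients is again palindromic with positive coefficients and has constant and leading coefficient $1$. For the unimodality (iii) I would invoke the lemma that a product of two palindromic unimodal polynomials with nonnegative coefficients is again palindromic and unimodal, and induct on the number of factors; the nontrivial input is the unimodality of a single Gaussian polynomial, which I would cite rather than reprove. Finally, for (iv), since each $W(v)$ has constant term $1$, the linear coefficient of the product is the sum of the linear coefficients of the factors; a $q$-multinomial with $k_v$ positive parts has linear coefficient $k_v-1$, and a leaf contributes the trivial factor $1$ with linear coefficient $0$, which sums to the claimed branching number $c_1=\sum_v(k_v-1)$.
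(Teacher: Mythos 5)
The paper does not prove Theorem 1.2 at all; it is imported wholesale from \cite{Prz-1,Prz-3}, so there is no internal proof to compare against. Your argument is correct and is the standard one for this result: the exponent bookkeeping $r(T,v)=r(T_1,v)+|E(T_2)|$ versus $r(T,v)=r(T_2,v)$ together with the $q$-Pascal identity is exactly the crux of part (1), and parts (2)--(4) follow by the telescoping, state-sum, and product-of-palindromic-unimodal arguments you describe (the last being in effect the paper's own Proposition 1.5(2)). The only points a careful writeup should smooth over are the inductive step of (3) when the root has degree one (where the wedge decomposition is trivial and one must pass to the first branching vertex, using that the degree-one root has weight $1$), and the convention in (4)(iv) that leaves contribute $0$ rather than $k_v-1=-1$ to the branching number --- an imprecision already present in the statement as printed.
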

Notice that for trees of Figure $1.1$ we get $Q(T)={5 \choose 2,3}_q[3]_q = [4]_q[5]_q$, and that $c_1=2$.
\begin{definition}\label{Definition 1.3}
 Consider the sequence of nonnegative integers $c_0,c_1,...,c_N$.
\begin{enumerate}
\item[(1)] If there is $j$ ($0\leq j \leq N$) such that $c_0\leq...\leq c_{j-1} \leq c_j \geq c_{j+1} \geq \ldots \geq c_N$
and $c_0>0, c_N>0,$ then we call the sequence {\it unimodal} of length $N$.
\item[(2)] If $c_i=c_{N-i}$ for every $i,$ then we call the sequence {\it symmetric} (or palindromic) centered at $N/2$.
\end{enumerate}
In this paper, we deal exclusively with unimodal symmetric sequences with $c_0=c_N=1$.
\begin{enumerate}
\item[(3)] If additionally $c_0<c_1< ...< c_{\lfloor N/2 \rfloor}= c_{\lceil N/2 \rceil} >...>c_{N-1}>c_N,$ then we call the sequence a {\it strictly
unimodal, symmetric sequence centered at $N/2$}. Here $\lfloor x \rfloor$ and $\lceil x \rceil$ denote the floor (Entier) and the ceiling
of the number $x$, respectively.
\item[(4)] If we do not require $c_0<c_1$ and $c_{N-1}>c_{N}$ but still assume $c_1<c_2< ...<c_{\lfloor N/2 \rfloor}= c_{\lceil N/2 \rceil} >...>c_{N-1}$
in $(3)$, we call the sequence {\it almost strictly unimodal}.
\item[(5)] If a symmetric unimodal sequence satisfies for some $j$:
$$c_0<  c_1 <...<c_j=...=c_{N-j}>...>c_{N-1}>c_N,$$ then the sequence is called a {\it trapezoidal} sequence with base of length $N$
and top of length $N-2j$ (i.e. $N-2j+1$ terms).
\item[(6)] If we assume only $c_1 <...<c_j=...=c_{N-j}>...>c_{N-1}$ in $(5),$ then we say we have an {\it almost trapezoidal} sequence with a top of length $N-2j$.
\item[(7)] We say that a polynomial in one variable $q$ with nonnegative coefficients is symmetric unimodal (respectively,
almost unimodal, trapezoidal, or almost trapezoidal) if its nonzero coefficients form a symmetric unimodal sequence (respectively,
almost unimodal, trapezoidal, or almost trapezoidal).
\item[(8)] We denote by $PSU_N$ the set of positive, symmetric, unimodal polynomials of degree $N$.
\end{enumerate}
\end{definition}
The classical result of Sylvester (1878, \cite{Syl}) established the unimodality of Gaussian polynomials ($q$-binomial coefficients):
$${m+n \choose m,n}_q = \frac{[m+n]_q!}{[m]_q! [n]_q!},$$
For us the following observation of MacMahon is of importance:
$$\mbox{If $yx=qxy$,  then } (x+y)^N= \sum_{m+n=N}{m+n \choose m,n}_q x^my^n.$$
For more basic information on Gaussian polynomials ($q$-binomial coefficients) we refer to \cite{K-C}.
In particular, Gaussian polynomials are symmetric centered at $\frac{mn}{2}$ and of degree $mn$.

Our starting point is the result by Pak and Panova \cite{Pak-Pan} describing almost strict unimodality of Gaussian polynomials
and listing the exceptions from almost unimodality.

\begin{theorem}(\cite{Pak-Pan})\label{Theorem 1.4} For all $m,n \geq 8$, the Gaussian polynomials ${m+n \choose m,n}_q$ are almost strictly unimodal.
Furthermore, for $m,n \geq 5$ the Gaussian polynomials are almost strictly unimodal with nine exceptions (we assume $m\leq n$):
$$\{(5,6), (5,10), (5,14), (6,6) , (6,7), (6,9), (6,11), (6,13),  (7,10)\}.$$

Furthermore, in all exceptional cases but $(6,6)$\footnote{In \cite{Pak-Pan}, the case of $(6,6)$ is described also, by mistake, as one
with ``the middle three coefficients equal."}
 the Gaussian polynomial is almost trapezoidal with a top of length $2$ ($3$ terms). The explicit formula of $(6,6)$ is the following:
$$ {12 \choose 6,6}_q =$$
$$1+ q +  2q^2 + 3 q^3 + 5 q^4 + 7 q^5 + 11 q^6 +...+$$
$$51 q^{15} + 55 q^{16} + 55 q^{17} + 58 q^{18} + 55 q^{19} + 55 q^{20} + 51 q^{21} + ...+q^{36}.$$
In such a case, we say that the top of the polynomial has a shape of type $(2,1,2)$; see Figure $1.2$ and compare to Definition \ref{Definition 1.6}.
\end{theorem}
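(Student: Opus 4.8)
The plan is to convert the analytic claim into a positivity statement about a single difference sequence. Write
$$\binom{m+n}{m,n}_q=\sum_{k=0}^{mn}c_k\,q^k ,$$
so that, by the partition interpretation, $c_k=p(k;m,n)$ counts the partitions of $k$ fitting inside the $m\times n$ box, and set $N:=mn$. Multiplying by $(1-q)$ replaces the coefficient sequence by its consecutive differences, so almost strict unimodality is exactly the assertion
$$d_k:=c_k-c_{k-1}\ge 1\qquad(2\le k\le \lfloor N/2\rfloor).$$
The symmetry $c_i=c_{N-i}$ supplies the decreasing half automatically, and the qualifier ``almost'' absorbs the forced equality $c_0=c_1=1$. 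Thus the whole theorem is a lower bound on the single sequence $(d_k)$.

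Next I would record the baseline $d_k\ge 0$, which is Sylvester's unimodality, and reinterpret strictness representation-theoretically. The Grassmannian $Gr(m,m+n)$ has Betti numbers $b_{2k}=c_k$, and the hard Lefschetz theorem endows its cohomology with a principal $SL_2$-action under which the difference $d_k=c_k-c_{k-1}$ (for $k\le N/2$) is exactly the multiplicity of the $(N-2k+1)$-dimensional irreducible summand. Hence $d_k\ge 1$ means precisely that \emph{every} such irreducible actually occurs. This is the workable form: it suffices to exhibit, for each $k$ in range, one ``new'' partition of $k$ inside the box, equivalently an injection from the partitions of $k-1$ into the partitions of $k$ (both inside the box) whose image omits at least one partition of a prescribed shape.

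I would then establish a uniform lower bound when $\min(m,n)\ge 8$. The delicate region is $k$ near $N/2$: the coefficients flatten at the peak, so $d_k$ is smallest there, and the bound must remain strictly positive all the way to the central term. The tail of small $k$ together with the nearly self-conjugate partitions around the center is the natural source of the ``extra'' partition that keeps $d_k\ge 1$. Producing a construction (or a generating-function estimate for $(1-q)\binom{m+n}{m,n}_q$) that is sharp near the middle rather than merely asymptotic is the main obstacle, because this sharpness is exactly what separates the generic case from the exceptions.

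Finally I would handle $\min(m,n)\in\{5,6,7\}$, which is genuinely infinite since $n$ may be large, as in $(5,14)$ and $(6,13)$. For each fixed small $m$ the Gaussian polynomial is a product of only a few factors, so the top differences $d_{N/2},d_{N/2-1},\dots$ can be evaluated in closed form as functions of $n$; their finitely many sign failures produce precisely the nine listed pairs. In every exceptional case except $(6,6)$ the only vanishing difference is $d_{N/2}=0$ with $d_{N/2-1}\ge 1$, which is the almost-trapezoidal shape with top of length $2$; for $(6,6)$ the same computation returns the central pattern $55,55,58,55,55$, i.e. the type $(2,1,2)$ top recorded above. Assembling the three regimes completes the classification.
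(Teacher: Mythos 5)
This statement is quoted verbatim from Pak and Panova and the paper offers no proof of its own, so your argument has to stand alone; it does not. Your reduction is correct: writing $\binom{m+n}{m,n}_q=\sum_k c_kq^k$ and $d_k=c_k-c_{k-1}$, almost strict unimodality is exactly the assertion $d_k\ge 1$ for $2\le k\le\lfloor mn/2\rfloor$, and the hard Lefschetz interpretation of $d_k$ as the multiplicity of the $(mn-2k+1)$-dimensional irreducible in the principal $\mathfrak{sl}_2$-decomposition of $H^*(Gr(m,m+n))$ correctly recovers Sylvester's bound $d_k\ge 0$. But the entire content of the theorem is the strict inequality $d_k\ge 1$, and at precisely that point you write that producing the required injection or estimate ``is the main obstacle.'' That obstacle \emph{is} the theorem: no construction, no generating-function bound, and no multiplicity estimate is actually supplied, so nothing beyond Sylvester's classical unimodality has been established. (For reference, Pak and Panova identify $d_k$ with the Kronecker coefficient $g\bigl((mn-k,k),m^n,m^n\bigr)$ and prove its positivity by a semigroup argument for tensor products of symmetric group representations; Zanello later deduced the result from Zeilberger's KOH identity, and Dhand gave a combinatorial injection. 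Any one of these supplies the missing step; your $SL_2$ reformulation by itself does not.)

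The treatment of $\min(m,n)\in\{5,6,7\}$ has the same character. You assert that the top differences ``can be evaluated in closed form as functions of $n$'' and that their sign failures are finitely many and yield exactly the nine listed pairs, but no closed forms are derived and no argument is given that the failures terminate as $n\to\infty$ for fixed $m$; the exceptional lists $(5,6),(5,10),(5,14)$ and $(6,7),(6,9),(6,11),(6,13)$ look like the beginnings of arithmetic progressions, and ruling out their continuation is a genuine task, not a bookkeeping step. One small point in your favor: your description of the $(6,6)$ case (central pattern $55,55,58,55,55$, so the vanishing difference is $d_{17}$ rather than $d_{18}$) matches the displayed expansion. But without either the uniform bound for $m,n\ge 8$ or the finite verification for small $m$, the proposal is a correct statement of what must be proved rather than a proof of it.
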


\begin{figure}[h]
\centerline{\psfig{figure=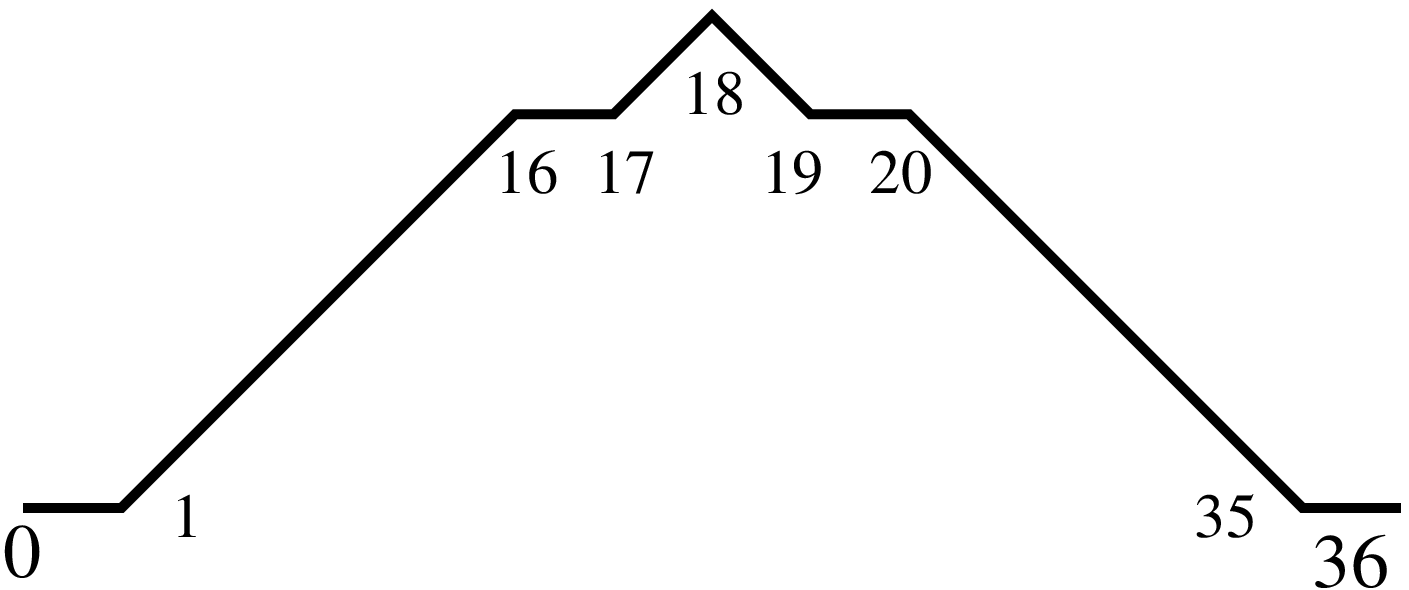,height=2.2cm}}\ \\ \
\centerline{Figure $1.2$: the shape of the polynomial ${12 \choose 6,6}_q$ called $(2,1,2)$ type on the top}
\end{figure}

Gaussian polynomials with $m\leq 4$ are carefully analyzed in Sections $2$ and $3$ (see also \cite{Lin,West}).

We complete this section with a fairly general result concerning the structure of a product of two unimodal polynomials.
Additionally, we define a preorder relation on unimodal polynomials and introduce the notion of their shapes.
\begin{proposition}\label{Proposition 1.5}\
\begin{enumerate}
\item[(1)] For the product of two $q$-integers, we have:
$$[m+1]_q[n+1]_q= $$
$$1+2q+3q^2+...+(m+1)q^m+...+(m+1)q^n+...+2q^{m+n-1}+q^{m+n}=$$
$$\sum_{i=0}^m q^i[m+n+1-2i]_q,$$
for $m\leq n$. We say that the product has a trapezoidal shape with base of length $m+n$ and top of length $n-m$
($n-m+1$ terms), see Figure $1.3$.
\item[(2)]
Let $Q(q)\in PSU_N$ thus it can be written as
$$Q(q)= \sum_{i=0}^Nc_iq^i =\sum_{i=0}^{\lfloor N/2 \rfloor}b_iq^{i}[N+1-2i]_q,$$
where $b_i=c_i-c_{i-1}\geq 0$  for $i>0$ and $b_0=c_0>0$.\\
Let $Q'(q)\in PSU_{N'}$ and
$$ Q'(q)=\sum_{j=0}^{N'}c'_jq^j =\sum_{j=0}^{\lfloor N'/2 \rfloor}b'_jq^{j}[N'+1-2j]_q.$$
Then the product $Q(q)Q'(q)$ is a polynomial in $PSU_{N+N'}$,
$$Q(q)Q'(q)= \sum_{k=0}^{\lfloor (N+N')/2 \rfloor} d_kq^{k}[N+N'+1-2k]_q.$$
Then $d_k \neq 0$ if and only if there are $i$ and $j$ with $b_i\neq 0$, $b'_j\neq 0$ and $k$ is in the interval
$[i+j, i+j + min(N-2i,N'-2j)]$.
\end{enumerate}
\end{proposition}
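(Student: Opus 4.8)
The plan is to prove part (1) by a direct computation and then use it as the structural building block for part (2). For part (1), I would expand the product $[m+1]_q[n+1]_q = (1+q+\cdots+q^m)(1+q+\cdots+q^n)$ and read off the coefficient $c_i$ as the number of pairs $(a,b)$ with $0\le a\le m$, $0\le b\le n$, and $a+b=i$. Counting these lattice points gives $c_i = i+1$ for $0\le i\le m$, then $c_i = m+1$ for $m\le i\le n$ (the plateau), and $c_i = m+n+1-i$ for $n\le i\le m+n$ by symmetry. This is exactly the trapezoidal shape claimed. To obtain the telescoping identity $\sum_{i=0}^m q^i[m+n+1-2i]_q$, I would verify that stripping off the outermost symmetric pair of ``staircases'' from the trapezoid leaves a shorter trapezoid; concretely, $[m+n+1]_q + q[m+n-1]_q + \cdots$ rebuilds the coefficient sequence layer by layer, and a short induction on $m$ (or a direct coefficient comparison) confirms equality. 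This part is routine once the coefficients are written down.

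For part (2), the first step is to justify the decomposition $Q(q)=\sum_{i} b_i q^i [N+1-2i]_q$ with $b_i = c_i - c_{i-1}$. This is precisely the observation that any positive symmetric unimodal polynomial centered at $N/2$ is a nonnegative combination of the ``elementary'' symmetric unimodal polynomials $q^i[N+1-2i]_q$, each of which is a flat-topped trapezoid centered at $N/2$; unimodality forces $b_i\ge 0$ for $i>0$, and symmetry guarantees the center aligns. I would record that each basis polynomial $q^i[N+1-2i]_q$ has support exactly on the integer interval $[i, N-i]$. The same decomposition applies to $Q'$, so the product becomes a double sum $\sum_{i,j} b_i b'_j\, q^{i+j}[N+1-2i]_q[N'+1-2j]_q$.

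The crux is then to analyze each product $[N+1-2i]_q[N'+1-2j]_q$ of two $q$-integers using part (1). Writing $M = N-2i$ and $M' = N'-2j$ and applying part (1) (with the roles arranged so the smaller exponent plays $m$), each such term is itself a sum $\sum_{\ell} q^{\ell}[\,(M+M'+1) - 2\ell\,]_q$ over $\ell$ ranging over an interval of length $\min(M,M') = \min(N-2i, N'-2j)$, and the overall prefactor $q^{i+j}$ shifts this so the innermost basis polynomials are $q^{i+j+\ell}[N+N'+1 - 2(i+j+\ell)]_q$. Collecting the coefficient of $q^k[N+N'+1-2k]_q$ across all $(i,j,\ell)$ with $i+j+\ell = k$, we see $d_k$ is a sum of products $b_i b'_j$ (each positive when both factors are nonzero) over exactly those $(i,j)$ for which $k$ lies in $[i+j,\ i+j+\min(N-2i,N'-2j)]$. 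Since all contributing terms are strictly positive, $d_k\neq 0$ if and only if at least one such $(i,j)$ with $b_i\neq 0$ and $b'_j\neq 0$ exists, which is the claim.

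I expect the main obstacle to be bookkeeping rather than conceptual depth: one must confirm that the basis polynomials $q^{i+j+\ell}[N+N'+1-2(i+j+\ell)]_q$ are genuinely linearly independent (equivalently, that no cancellation occurs when regrouping the triple sum), so that the coefficient $d_k$ is truly the \emph{sum} of nonnegative contributions and hence vanishes only when every contribution does. This follows because the elementary trapezoids $q^t[N+N'+1-2t]_q$ for distinct $t$ have distinct leading supports, but I would state this independence explicitly to close the gap, and I would double-check the orientation convention in part (1) (which of $M, M'$ is the smaller) so that the interval endpoint $\min(N-2i, N'-2j)$ comes out correctly.
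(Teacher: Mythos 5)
Your proposal is correct and follows exactly the route the paper intends: part (1) by direct coefficient counting, and part (2) by decomposing both factors into the elementary trapezoids $q^i[N+1-2i]_q$ and applying part (1) to each pairwise product (the paper's own proof is just the one-line remark that (2) ``follows by using (1) several times''). Your explicit attention to the uniqueness of the $d_k$-decomposition and to the nonnegativity of all contributions is a worthwhile filling-in of details the paper leaves implicit.
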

\begin{proof} (1) follows directly from the definition of multiplication.\\
(2) follows by using (1) several times.
\end{proof}

We stress that the simple observation in the proposition is  very important and used several times in this paper.

\begin{figure}[h]
\centerline{\psfig{figure=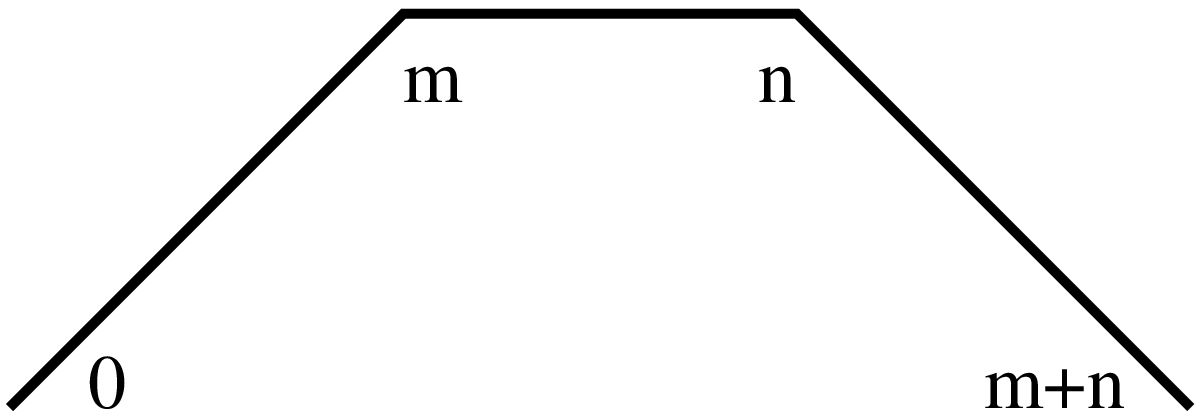,height=2.2cm}}\ \\ \ \\
\centerline{Figure $1.3$: \ a trapezoidal shape of $[m+1]_q[n+1]_q$.}
\end{figure}

Here we now define the notion of polynomials having the same shape (relation $\simeq$).\\
We consider the following preordering relation on nonnegative symmetric unimodal polynomials of variable $q$. We allow here
$b_0$ or $b'_0$ to be equal to zero.
\begin{definition}\label{Definition 1.6} Let
$$ P(q)= b_0[N+1]_q +b_1q[N-1]_q+b_2q^2[N-3]_q+...+b_{\lfloor N/2 \rfloor}q^{\lfloor N/2 \rfloor}[N+1 -2\lfloor N/2 \rfloor]_q $$
and
$$ P'(q)= b'_0[N+1]_q +b'_1q[N-1]_q+ b'_2q^2[N-3]_q+...+b'_{\lfloor N/2 \rfloor}q^{\lfloor N/2 \rfloor}[ N+1 - 2\lfloor N/2 \rfloor]_q $$
then we say that $P(q) \preceq P'(q)$ (we say the shape of $P'(q)$ dominates the shape of $P(q)$) if whenever $b'_i=0$ then $b_i=0$.
If $P(q) \preceq P'(q)$ and $P'(q) \preceq P(q),$ we say that the polynomials are shape equivalent or shortly they have the same shape,
and write $P(q) \simeq P'(q)$.\\
If $b_i\neq 0,$ then we say that $P(q)$ has a nontrivial row of length $N-2i$ at the height $i$.
\end{definition}

Proposition \ref{Proposition 1.5}, despite its simplicity, immediately leads to useful properties:
\begin{corollary}\label{Corollary 1.7}
\begin{enumerate}
\item[(1)]
Assume that $P(q)\in PSU_N$ and $[n+1]_q$ is a $q$-integer with $n \geq N$. Then the product
$P(q)[n+1]_q$ has a trapezoidal shape with a bottom of length $N+n$ and top of length $n-N$. Furthermore, if $n=N-1,$ then
$P(q)[n+1]_q$ is strictly unimodal. In other words, in these cases, $P(q)[n+1]_q$ has the same shape as $[N+1]_q[n+1]_q$.
\item[(2)] If $P_1(q), P_2(q) \in PSU_N$, $P_1(q) $ dominates $P_2(q)$, and $P_3(q)$ is a positive symmetric unimodal polynomial such that
$P_2(q)P_3(q)$ is strictly unimodal, then $P_1(q)P_3(q)$ is strictly unimodal.
\end{enumerate}
\end{corollary}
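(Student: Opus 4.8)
The plan is to read off everything from the row decomposition of Proposition~\ref{Proposition 1.5}(2), so first I would record the two facts that make it applicable. Writing $P(q)=\sum_{i=0}^{\lfloor N/2\rfloor}b_iq^i[N+1-2i]_q$ with $b_0=c_0>0$, and noting that the $q$-integer $[n+1]_q$ is already its own row decomposition (that is, $b'_0=1$ and $b'_j=0$ for all $j>0$), I would apply Proposition~\ref{Proposition 1.5}(2) to the product $P(q)[n+1]_q=\sum_k d_kq^k[N+n+1-2k]_q$ with $N'=n$. Since the only nonzero $b'_j$ is $b'_0$, the nonvanishing criterion collapses to the statement that $d_k\neq0$ if and only if there is an $i$ with $b_i\neq0$ and $k\in[i,\,i+\min(N-2i,n)]$.

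For Part (1) with $n\geq N$ I would use $b_0\neq0$ together with $\min(N,n)=N$ to see that the index $i=0$ already forces $d_k\neq0$ for every $k\in[0,N]$, while $k\leq N-i$ is impossible for $k>N$, so $d_k=0$ for $k>N$. Thus the nonzero rows sit exactly at heights $k=0,\dots,N$. Since the coefficient of $q^s$ for $s\leq (N+n)/2$ equals the partial sum $\sum_{k\le s}d_k$, these coefficients increase strictly until $s=N$ and are then constant up to $s=n$; this is precisely a trapezoid with base $N+n$ and top of length $n-N$, i.e. the shape of $[N+1]_q[n+1]_q$. In the boundary case $n=N-1$ the same computation gives nonzero rows at all heights $k=0,\dots,N-1=\lfloor(N+n)/2\rfloor$, so the partial sums increase strictly up to the two equal central coefficients and the product is strictly unimodal.

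For Part (2) the first step is to translate strict unimodality into the language of rows: for a product of degree $M=N+N_3$ (with $N_3$ the degree of $P_3$), the coefficient of $q^s$ with $s\leq M/2$ is $\sum_{k\leq s}d_k$, so strict unimodality is equivalent to $d_k\neq0$ for every $k=0,1,\dots,\lfloor M/2\rfloor$ (the single peak, or for odd $M$ the doubled peak, being automatic from symmetry). With this restatement the corollary becomes a support-containment argument. The hypothesis that $P_1$ dominates $P_2$ says, via Definition~\ref{Definition 1.6}, that $b^{(2)}_i\neq0$ implies $b^{(1)}_i\neq0$. Given any $k\leq\lfloor M/2\rfloor$, strict unimodality of $P_2(q)P_3(q)$ furnishes indices $i,j$ with $b^{(2)}_i\neq0$, $b^{(3)}_j\neq0$ and $k\in[i+j,\,i+j+\min(N-2i,N_3-2j)]$; by domination $b^{(1)}_i\neq0$ as well, and because $P_1$ and $P_2$ have the same degree $N$ the intervals are identical, so the very same pair $(i,j)$ witnesses $d_k\neq0$ for $P_1(q)P_3(q)$. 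Hence every row of $P_1(q)P_3(q)$ up to height $\lfloor M/2\rfloor$ is nonzero, and the product is strictly unimodal.

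The routine computations are trivial; the one step that needs care is the equivalence ``strictly unimodal $\Longleftrightarrow$ all rows nonzero up to height $\lfloor M/2\rfloor$'', where the parity of $M$ and the floor/ceiling bookkeeping must be handled correctly, and, in Part (1), the verification that the flat top has length exactly $n-N$ (with the degenerate endpoints $n=N$ giving a single peak and $n=N-1$ giving the odd-degree double peak).
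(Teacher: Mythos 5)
Your proof is correct and follows exactly the route the paper intends: the paper offers no written proof of Corollary~\ref{Corollary 1.7} beyond the remark that it follows immediately from Proposition~\ref{Proposition 1.5}, and your argument is precisely that deduction carried out in detail (the row decomposition, the collapse of the nonvanishing criterion when one factor is a $q$-integer, and the translation ``strictly unimodal $\Leftrightarrow$ all rows up to height $\lfloor M/2\rfloor$ nonzero''). The bookkeeping in both parts, including the $n=N-1$ boundary case and the support-containment argument for part (2), checks out.
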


Another very simple corollary (or its variations) will be often used:
\begin{corollary}\label{Corollary 1.8}
Consider $P(q)=\sum\limits_{i=0}^{\lfloor N/2 \rfloor}b_iq^{i}[N+1-2i]_q\in PSU_N$ with $b_1= 0=b_s$ (for some $s\geq 3$) and otherwise $b_i\neq 0$.
Then for $k\leq N+1$, $P(q)[k+1]_q$ is strictly unimodal, except for $k+1=N-1$ or $k+1= N+1-2s$, where $P(q)[k+1]_q$ has a trapezoidal shape with a top of length $2$.
\end{corollary}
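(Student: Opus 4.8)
The plan is to expand everything in the ``row'' basis of Proposition \ref{Proposition 1.5}(2) and reduce the statement to a purely combinatorial coverage question. Write $Q'(q)=[k+1]_q$ in the form $\sum_j b'_jq^j[N'+1-2j]_q$ with $N'=k$; here $b'_0=1$ and $b'_j=0$ for $j\geq 1$. Applying Proposition \ref{Proposition 1.5}(2) with $M:=N+k$ gives $P(q)[k+1]_q=\sum_\ell d_\ell q^\ell[M+1-2\ell]_q\in PSU_{M}$, where $d_\ell\neq 0$ if and only if $\ell$ lies in one of the intervals $[i,\,i+\min(N-2i,k)]$ for some row index $i$ with $b_i\neq 0$. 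Since the coefficient of $q^t$ equals $\sum_{\ell\leq\min(t,M-t)}d_\ell$, we have $c_0=d_0$ and $c_t-c_{t-1}=d_t$ for $1\leq t\leq\lfloor M/2\rfloor$; hence $P(q)[k+1]_q$ is strictly unimodal precisely when $d_\ell\neq 0$ for every $0\leq\ell\leq\lfloor M/2\rfloor$. Thus it suffices to decide which heights $\ell$ in this range are covered by the intervals coming from the nonzero rows of $P$, i.e.\ by the rows $i\in\{0,1,\dots,\lfloor N/2\rfloor\}\setminus\{1,s\}$.

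Next I would rewrite the coverage condition. Unwinding the inequalities $i\leq\ell\leq i+\min(N-2i,k)$ shows that height $\ell$ is covered if and only if the integer interval $J_\ell:=\bigl[\max(0,\ell-k),\,\min(\ell,N-\ell)\bigr]$ contains an admissible index, that is, an integer different from $1$ and from $s$. The key structural observation is that the two forbidden rows $1$ and $s$ are \emph{non-adjacent}, because $s\geq 3$; consequently any set of two or more consecutive integers automatically contains an admissible index. I would then check, by a short case analysis on the sign of $\ell-k$ and on whether $\ell\leq N/2$ or $\ell>N/2$, that for every $\ell<\lfloor M/2\rfloor$ the interval $J_\ell$ either contains $0$ or consists of at least two consecutive integers. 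In all of these cases $J_\ell$ meets the admissible set, so neither of the missing rows $b_1,b_s$ can create a gap below the middle.

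The whole question therefore concentrates at the middle height. For $M$ odd, $J_{\lfloor M/2\rfloor}$ still contains $0$ or two consecutive integers, so this height is covered as well and the product is strictly unimodal. For $M$ even the interval $J_{M/2}$ collapses to the single integer $i^\ast=(N-k)/2$, so the middle is covered if and only if $i^\ast\notin\{1,s\}$. This fails exactly when $i^\ast=1$ (that is $k=N-2$, i.e.\ $k+1=N-1$) or $i^\ast=s$ (that is $k=N-2s$, i.e.\ $k+1=N+1-2s$), which are precisely the two listed exceptions. In each exceptional case $d_{M/2}=0$ while $d_\ell>0$ for $\ell<M/2$, so $c_{M/2-1}=c_{M/2}=c_{M/2+1}$ with strict increase below; by Definition \ref{Definition 1.3}(5) this is a trapezoidal shape with a top of length $2$.

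The main obstacle is the bookkeeping in the second step: one must verify cleanly that $J_\ell$ is never entirely contained in $\{1,s\}$ for $\ell<\lfloor M/2\rfloor$, keeping track of the parity of $M$, of the boundary range $k\leq N+1$, and of degenerate configurations (for instance $s=\lfloor N/2\rfloor$, where the missing row sits at the top of $P$). Everything else is a direct consequence of Proposition \ref{Proposition 1.5}(2); the only genuinely delicate point is that at the middle height a single missing critical row cannot be healed by a neighbour, which is exactly what produces the two exceptions.
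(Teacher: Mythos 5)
Your argument is correct and is essentially the paper's intended route: the paper states this corollary as an immediate consequence of Proposition \ref{Proposition 1.5}(2) without writing out the details, and your reduction to the coverage of the intervals $J_\ell=[\max(0,\ell-k),\min(\ell,N-\ell)]$ by the admissible row indices is exactly that argument made explicit. Your identification of the single-point middle interval $i^\ast=(N-k)/2$ (for $M=N+k$ even, $k<N$) as the only possible failure correctly produces the two listed exceptions and the top of length $2$.
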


We also obtain the following concrete corollary which will be one of the basic  bricks for our general results on strict unimodality of plucking polynomials.
\begin{corollary}\label{Corollary 1.9}
Let $P(q)$ be one of the exceptional polynomials in \cite{Pak-Pan} and $d=deg (P(q))$ then
$$P(q){m+n\choose m, n}_{q} \mbox{ where $1\leq m\leq n\geq 2$} $$ is strictly unimodal with the following exceptions:
\begin{enumerate}
\item[$(1)$] ${12 \choose 6,6}_q[3]_q$ which has a trapezoidal shape with a top of length $2$.
\item[$(2)$] $P(q) [d-1]_q$ which has a trapezoidal shape with a top of length $2$.
\item[$(3)$] $P(q)[k+1]_q$ ($k\geq d+2$) which has a trapezoidal shape with a top of length $k-d$.
\end{enumerate}
\end{corollary}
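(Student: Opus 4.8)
The plan is to reduce the entire statement to the support-covering criterion of Proposition~\ref{Proposition 1.5}(2). Write $P(q)=\sum_i b_i q^i[d+1-2i]_q$ and $\binom{m+n}{m,n}_q=\sum_j b'_j q^j[mn+1-2j]_q$. By Proposition~\ref{Proposition 1.5}(2) the product $P(q)\binom{m+n}{m,n}_q$ is strictly unimodal exactly when its coefficients $d_k$ never vanish for $0\le k\le\lfloor(d+mn)/2\rfloor$, and $d_k\ne 0$ iff there are $i,j$ with $b_i\ne 0$, $b'_j\ne 0$ and $i+j\le k\le i+j+\min(d-2i,mn-2j)$. Thus everything depends only on the two supports $S=\{i:b_i\ne 0\}$ and $S'=\{j:b'_j\ne 0\}$. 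First I would record $S$ from Theorem~\ref{Theorem 1.4}: for the eight non-$(6,6)$ exceptional polynomials $S=\{0,2,3,\dots,d/2-1\}$, the punctures sitting at $1$ and at $s=d/2$ (this is what a top of length $2$ means in the $b$-expansion), while for $(6,6)$ one has $S=\{0,2,3,\dots,16,18\}$, the punctures at $1$ and at $s=17$ with $b_{18}\ne 0$. In every case $S$ is an interval with exactly the two punctures $\{1,s\}$.

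The case $m=1$ I would dispatch directly, since $\binom{n+1}{1,n}_q=[n+1]_q$ and $S'=\{0\}$. For $n\le d+1$ this is precisely the setting of Corollary~\ref{Corollary 1.8} with $N=d$: the product is strictly unimodal except at $k+1=N-1=d-1$ and at $k+1=N+1-2s$. The first puncture gives exception~(2), namely $P(q)[d-1]_q$, uniformly for all nine polynomials. The second is vacuous ($N+1-2s=1$, excluded by $n\ge 2$) for the eight polynomials with $s=d/2$, but for $(6,6)$ one has $N+1-2s=3$, producing precisely exception~(1), $\binom{12}{6,6}_q[3]_q$. For $n\ge d+2$ I would invoke Corollary~\ref{Corollary 1.7}(1) with $N=d$, giving a trapezoidal top of length $n-d=k-d$, which is exception~(3). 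Hence the $m=1$ column yields exactly the three listed exceptions and nothing more.

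The heart of the argument is $m\ge 2$, where I claim the covering condition holds for every $k$, so there are no exceptions. The key structural gain is that for $m\ge 2$, $n\ge 2$ one has $b'_2=c'_2-c'_1=2-1=1\ne 0$, so $2\in S'$; moreover $0\in S'$, and $S'$ contains a long consecutive run reaching near $\lfloor mn/2\rfloor$. For $m\ge 8$ Theorem~\ref{Theorem 1.4} makes $\binom{m+n}{m,n}_q$ almost strictly unimodal, so $S'=\{0,2,3,\dots,\lfloor mn/2\rfloor\}$; for $5\le m\le 7$ the same holds up to a single extra puncture from the exceptional list; for $m\in\{2,3,4\}$ I would quote the explicit description from Sections~$2$ and $3$. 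Given such an $S'$, I would exhibit witnesses $(i,j)$ for each $k$: the rows based at $(0,j)$ and, by symmetry, those based at $(i,j)$ with $i$ near $d/2$ cover the bottom and top, while the middle is reached by taking $j\approx(mn-d)/2$. Whenever the single required $j$ happens to be a puncture of $S'$, the fact that $S$ is an interval off $\{1,s\}$ lets me shift to an adjacent pair $(i+1,j-1)$ or $(i-1,j+1)$, keeping $i+j$ fixed, and check the upper bound $i+j+\min(d-2i,mn-2j)\ge k$ still holds.

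The main obstacle is exactly this central range for $m\ge 2$: one must guarantee that the two punctures of $S$ (at $1$ and $s$) together with the at-most-one puncture of $S'$ can never simultaneously block a single $k$ near $(d+mn)/2$. Because $m=1$ fails precisely there (its $S'=\{0\}$ is too thin to reach the center once $n>d$, and too thin to bridge the $s$-puncture at the special values of $n$), the whole content of the $m\ge 2$ case is that enlarging $S'$ to contain $0$, $2$, and a full interval restores the covering. I expect the delicate sub-case to be an exceptional Gaussian (such as $(6,6)$) multiplied by an exceptional $P$, where both supports are punctured; there I would verify directly that the intervals coming from the surviving support elements still tile $[0,\lfloor(d+mn)/2\rfloor]$, which completes the proof.
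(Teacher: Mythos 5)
Your proposal is correct and follows essentially the same route as the paper: reduce everything to the support-covering criterion of Proposition \ref{Proposition 1.5}(2), dispatch the $m=1$ case via Corollaries \ref{Corollary 1.7} and \ref{Corollary 1.8} (correctly locating exception (1) at the puncture $s=17$ of $\binom{12}{6,6}_q$ and exception (2) at the puncture $1$), defer $m=2,3,4$ to the analyses of Sections 2 and 3, and use Theorem \ref{Theorem 1.4} for $m\geq 5$. Your write-up is in fact somewhat more explicit about the supports and the covering mechanics than the paper's own three-line proof.
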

\begin{proof} The cases of $m=2,3,$ and $4$ are discussed in following sections. For $m \geq 5,$ the corollary follows from Theorem \ref{Theorem 1.4}
and Proposition \ref{Proposition 1.5}. The case of $m=1$ ($q$-integer $[n+1]_q$) follows from
Corollaries \ref{Corollary 1.7} and \ref{Corollary 1.8}. For $(1),$ we can also concretely compute that:
$${12 \choose 6,6}_q[3]_q= 1 + 2 q +...+ 161 q^{17} + 168 q^{18} + 168 q^{19} + 168 q^{20} + 161 q^{21} +... + q^{38}.$$
\end{proof}

The next two sections are purely algebraic. Our starting point is Theorem \ref{Theorem 1.2} decomposing $Q(T)$ into a product of
Gaussian $q$-binomial coefficients. Because of Theorem \ref{Theorem 1.4} of Pak and Panova, we have to pay special attention to factors
of the type ${m+n\choose m,n}_q$ with $m\leq 4$; especially because by Proposition \ref{Proposition 1.5} we can already conclude that
products of factors with $m \geq 5$ are strictly unimodal.

\section{Analysis of products of $q$-integers and Gaussian polynomials ${2+n\choose 2,n}_{q}$}

We start with an analysis of products of $q$-integers which will always have a trapezoidal shape.
Our results are based on Proposition \ref{Proposition 1.5}.

\begin{proposition}\label{Proposition 2.1}
Consider the sequence of positive numbers $a_1 \leq a_2 \leq ...\leq a_k$. Then the product
$[a_1+1]_q[a_2+1]_q \cdots [a_k+1]_q$ is always trapezoidal with base of length $a_1+a_2+...+a_k$ and
\begin{enumerate}
\item[(1)] strictly unimodal if and only if $a_1+a_2+...+a_{k-1} +1 \geq a_k$,
\item[(2)] trapezoidal with a top of length $a_k-(a_1+...+a_{k-1})$ if \\
 $a_k-(a_1+...+a_{k-1}) \geq 0$.
\end{enumerate}
\end{proposition}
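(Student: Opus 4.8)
The plan is to prove the statement by induction on $k$, building the product one factor at a time in increasing order and using Proposition~\ref{Proposition 1.5}(2) as the only engine. Write $P_j(q)=[a_1+1]_q\cdots[a_j+1]_q$, a polynomial of degree $N_j=a_1+\cdots+a_j$, and expand it as in Proposition~\ref{Proposition 1.5}(2), $P_j(q)=\sum_i b_i^{(j)}q^i[N_j+1-2i]_q$. First I would record the elementary reformulation of ``trapezoidal'' in terms of these coefficients: a polynomial in $PSU_N$ is trapezoidal with a top of length $N-2j_0$ \emph{if and only if} the nonzero coefficients of its expansion $\sum_i b_iq^i[N+1-2i]_q$ form an initial segment $\{0,1,\dots,j_0\}$. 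Indeed, the partial sums $c_\ell=\sum_{i\le\min(\ell,N-\ell)}b_i$ are strictly increasing exactly while $b_i>0$ and become constant once the $b_i$ vanish; this is what distinguishes a genuine trapezoid (together with its degenerate strictly unimodal cases of top length $0$ or $1$) from an arbitrary unimodal shape.

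The inductive step multiplies $P_j$ by the $q$-integer $[a_{j+1}+1]_q$, whose own expansion is supported only at $i=0$. Applying Proposition~\ref{Proposition 1.5}(2) with this one-term factor, the nonzero coefficients of $P_{j+1}$ are precisely those indices $k$ lying in the union of intervals $\bigcup_{i\in\mathrm{supp}(b^{(j)})}[\,i,\ i+\min(N_j-2i,a_{j+1})\,]$. By the inductive hypothesis $\mathrm{supp}(b^{(j)})=\{0,1,\dots,j_0\}$ is an initial segment whose left endpoints advance by one while the right endpoints stay ahead, so consecutive intervals overlap or abut and their union is again a single initial segment $[0,m]$. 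Hence, by the reformulation above, $P_{j+1}$ is automatically trapezoidal, and the entire content of the inductive step is to \emph{locate the right endpoint} $m=\max_{0\le i\le j_0}f(i)$, where $f(i)=i+\min(N_j-2i,a_{j+1})$, and thereby read off the top length $N_{j+1}-2m$.

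Analyzing $f$ splits into two regimes. If $a_{j+1}\ge N_j$, then $f$ is decreasing on $[0,\infty)$, so $m=f(0)=N_j$ and the top length is $a_{j+1}-N_j\ge 0$; this is exactly the regime already covered by Corollary~\ref{Corollary 1.7}(1). The delicate regime is $a_{j+1}<N_j$, where $f$ first increases and then decreases with unconstrained integer maximum $\lfloor N_{j+1}/2\rfloor$ attained near $i^{\ast}=(N_j-a_{j+1})/2$; here I must show that the cap $i\le j_0$ does not truncate this maximum, i.e. that $\lfloor i^{\ast}\rfloor\le j_0$, which forces $m=\lfloor N_{j+1}/2\rfloor$ and hence strict unimodality. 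I expect this to be the main obstacle, and it is precisely where the hypothesis $a_1\le a_2\le\cdots\le a_k$ enters: when $P_j$ is already strictly unimodal one has $j_0=\lfloor N_j/2\rfloor$ and the bound is immediate, while when $P_j$ is a genuine trapezoid one has $j_0=a_1+\cdots+a_{j-1}$, and $\lfloor i^{\ast}\rfloor\le j_0$ then reduces to $a_j-a_{j+1}\le a_1+\cdots+a_{j-1}$, which holds because $a_j\le a_{j+1}$.

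Finally I would collect the outcome of the induction applied to the full product $P_k$: it is trapezoidal with base $N_k=a_1+\cdots+a_k$ and top length $\max\bigl(0,\,a_k-(a_1+\cdots+a_{k-1})\bigr)$. This gives statement (2) directly whenever $a_k-(a_1+\cdots+a_{k-1})\ge 0$, and it gives (1) upon noting that strict unimodality is equivalent to a top of length $0$ or $1$, i.e. to $a_k-(a_1+\cdots+a_{k-1})\le 1$, which rearranges to $a_1+\cdots+a_{k-1}+1\ge a_k$.
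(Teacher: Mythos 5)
Your proof is correct and follows essentially the same route as the paper's: induction on the number of factors, with the inductive step (a trapezoidal polynomial times a $q$-integer) driven by Proposition~\ref{Proposition 1.5}(2). The paper packages that step as the absorption law of Lemma~\ref{Lemma 2.2}, whereas you carry out the interval-union computation explicitly and, usefully, pinpoint exactly where the ordering hypothesis $a_1\le\cdots\le a_k$ enters.
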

\begin{proof} It holds for $k=2$ as $[a_1+1]_q[a_2+1]_q$ is a trapezoid with base of length $a_1+a_2$ and top of length $a_2-a_1$ by
Proposition \ref{Proposition 1.5}$(1).$
Thus for strict unimodality we need $a_2=a_1$ or $a_2=a_1+1$. To complete the proof inductively, we only need to prove the case
of three integers which can be restated as computing the product of a polynomial of trapezoidal shape with a $q$-integer. The inductive step reduces to
this case. Therefore, we only need part $(2)$ of the following lemma.
\end{proof}
\begin{lemma}\label{Lemma 2.2}  Assume $a\leq b\leq c$ and $b-a$ is even. Then:
\begin{enumerate}
\item [(1)] (Absorption law) The shape of the polynomial $(q^{(b-a)/2}[a+1]_q+ [b+1]_q)[c+1]_q$ is the same as that of $[b+1]_q[c+1]_q,$ that is a trapezoidal shape
 of base of length $b+c$ and top of length $c-b$.
\item[(2)] Let the polynomial $P_{a,b}(q)$ have a trapezoidal shape with the base of length $a+b$ and top of length $b-a$.
Then the product polynomial $P_{a,b}(q)[c+1]_q$ has a trapezoidal shape with base of length $a+b+c$ and the same shape as the triple
product $[a+1]_q[b+1]_q[c+1]_q$.
\item[(3)] Let the polynomials $P_{a,b}(q)$ ($a\leq b$) and $P_{c,d}(q)$ ($c \leq d$) have trapezoidal shapes with base of length $b+a$ and
$d+c$, respectively, and tops of length $b-a$ and $d-c$, respectively. Then the product $P_{a,b}(q)P_{c,d}(q)$ has a trapezoidal shape
of base of length $a+b+c+d$ and top of length which can be written as $2 \max(a,b,c,d)- (a+b+c+d)$ if this number is not negative.
Otherwise the product is strictly unimodal.
\end{enumerate}
\end{lemma}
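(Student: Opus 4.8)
The plan is to reduce all three parts to a single mechanism supplied by Proposition \ref{Proposition 1.5}(2): the support of the trapezoidal decomposition $\sum_k d_k q^k[\,\cdot\,]_q$ of a product $Q(q)Q'(q)$ — and hence its shape in the sense of Definition \ref{Definition 1.6} — depends \emph{only} on the supports $\{i : b_i\neq 0\}$ and $\{j : b'_j\neq 0\}$ of the two factors together with the degrees $N,N'$. Thus replacing a factor by a shape-equivalent one leaves the shape of the product unchanged. Since any polynomial with a trapezoidal shape of base $a+b$ and top $b-a$ has its $b_i$-support equal to the block $\{0,1,\dots,a\}$ — exactly the support of the genuine trapezoid $[a+1]_q[b+1]_q$ — each $P_{a,b}(q)$ is shape-equivalent to $[a+1]_q[b+1]_q$. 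This lets me trade the abstract trapezoids $P_{a,b},P_{c,d}$ for honest products of $q$-integers and then merely compute supports.

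For part (1) I would first record that $R(q):=q^{(b-a)/2}[a+1]_q+[b+1]_q$ is a positive symmetric unimodal polynomial of degree $b$ whose coefficients are $1$ outside and $2$ inside a central band; here the evenness of $b-a$ is exactly what places the inner band at the integer height $(b-a)/2$, so in the decomposition of Definition \ref{Definition 1.6} we have $b_0=b_{(b-a)/2}=1$ and all other $b_i=0$. Applying Proposition \ref{Proposition 1.5}(2) to $R(q)[c+1]_q$ (with $N=b$, $N'=c$, and the support of $[c+1]_q$ equal to $\{0\}$) produces two intervals of nonzero $d_k$: from $i=0$ the block $[0,b]$ (using $b\le c$), and from $i=(b-a)/2$ the block $[(b-a)/2,(b+a)/2]$. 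Since $0\le (b-a)/2$ and $(b+a)/2\le b$, the second block is swallowed by the first, so the support is exactly $\{0,\dots,b\}$, which is also the support of $[b+1]_q[c+1]_q$. Hence the two products are shape-equivalent, i.e.\ a trapezoid of base $b+c$ and top $c-b$; this is the \emph{absorption}.

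Part (2) is then immediate on the level of shapes: $P_{a,b}(q)\simeq [a+1]_q[b+1]_q$, so by the principle above $P_{a,b}(q)[c+1]_q\simeq [a+1]_q[b+1]_q[c+1]_q$. It remains to check that this triple product is genuinely trapezoidal of base $a+b+c$, which I would do by the same Proposition \ref{Proposition 1.5}(2) computation: the nonzero-$d_k$ set is $\bigcup_{i=0}^{a}\big[\,i,\ i+\min(a+b-2i,\,c)\,\big]$, and since consecutive left endpoints differ by $1$ while each right endpoint is at least its left endpoint, these intervals overlap into a single block $\{0,1,\dots,M\}$ with $M=\min\!\big(a+b,\ \lfloor (a+b+c)/2\rfloor\big)$; a contiguous block starting at $0$ is precisely a trapezoidal shape. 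For part (3) I would likewise pass to $P_{a,b}(q)P_{c,d}(q)\simeq [a+1]_q[b+1]_q[c+1]_q[d+1]_q$ and invoke Proposition \ref{Proposition 2.1} for $k=4$, which yields a trapezoid of base $a+b+c+d$ with top $2\max(a,b,c,d)-(a+b+c+d)$ when this is nonnegative and strict unimodality otherwise. This is legitimate, and not circular, because Proposition \ref{Proposition 2.1} rests only on part (2), which I have proved independently above.

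The step I expect to be the real obstacle is the contiguity analysis in part (2): one must verify that the crossover between the two branches of $\min(a+b-2i,\,c)$ — the ``$i+c$'' branch for small $i$ and the ``$a+b-i$'' branch for large $i$ — opens no gap, and that the block closes at the correct height $M$, so that the flat-top length equals $(a+b+c)-2M$ and correctly separates the genuine-trapezoid regime $c\ge a+b+2$ from strict unimodality. Once the shape-invariance principle is in hand, everything else is bookkeeping with supports.
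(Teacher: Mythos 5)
Your proposal is correct and follows essentially the same route as the paper: the paper disposes of all three parts by citing Proposition \ref{Proposition 1.5} (via Corollary \ref{Corollary 1.7}) and the observation that the shape of a product depends only on the supports of the factors' row decompositions, which is precisely the ``shape-invariance principle'' you formalize and then verify by the support computations. Your write-up simply supplies the bookkeeping (the support of a trapezoid of base $a+b$ and top $b-a$ is $\{0,\dots,a\}$, the contiguity of $\bigcup_i[i,\,i+\min(a+b-2i,c)]$, and the non-circular appeal to Proposition \ref{Proposition 2.1}) that the paper leaves implicit.
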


\begin{proof}
$(1)$ follows directly from definition (it is a special case of Corollary \ref{Corollary 1.7}).\\
$(2)$ follows from $(1)$.\\
$(3)$ follows by applying $(2)$ twice and observing that $P_{c,d}(q)$ has the same shape as $[c+1]_q[d+1]_q$.
\end{proof}

We show here that the product of ${2+n\choose 2,n}_q$ and an integer $[k+1]_q=1+q+...+q^k$ ($k\geq 1$) is a polynomial with
a trapezoid shape. More precisely:
\begin{proposition}\label{Proposition 2.3}
${2+n \choose 2,n}_q[k+1]_q$ is strictly unimodal if $k+2\leq  2n$ and $2n+2-k$ is not divisible by $4$.
If $k+2\leq  2n$ and $2n+2-k$ is divisible by $4,$ then the product has a trapezoidal shape with a top of length $2$.
If $k \geq 2n,$ then the product has a trapezoidal shape with a top of length $k-2n$.
\end{proposition}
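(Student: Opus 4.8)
The plan is to read off the ``row shape'' of the Gaussian factor, multiply by $[k+1]_q$ via Proposition~\ref{Proposition 1.5}(2), and reduce the whole question to locating the largest height that carries a nontrivial row. First I would record the shape of ${2+n \choose 2,n}_q$: its coefficient of $q^{j}$ counts partitions inside a $2\times n$ box, so for $j\le n$ it equals $\lfloor j/2\rfloor+1$ and the coefficient sequence begins $1,1,2,2,3,3,\dots$, which by symmetry determines everything. In the decomposition of Proposition~\ref{Proposition 1.5}(2) this means $b_i=1$ for even $i$ with $0\le i\le n$ and $b_i=0$ for odd $i$, so that
$${2+n \choose 2,n}_q=\sum_{\substack{0\le i\le n\\ i\ \mathrm{even}}}q^{i}[2n+1-2i]_q .$$

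Next I would apply Proposition~\ref{Proposition 1.5}(2) with this polynomial (degree $N=2n$) and $Q'(q)=[k+1]_q$ (degree $N'=k$, so $b'_0=1$ and all other $b'_j=0$). Writing the product as $\sum_\ell d_\ell q^{\ell}[2n+k+1-2\ell]_q$, the proposition gives $d_\ell\neq 0$ exactly when $\ell$ lies in
$$S=\bigcup_{\substack{0\le i\le n\\ i\ \mathrm{even}}}\bigl[\,i,\ i+\min(2n-2i,\,k)\,\bigr].$$
Since $k\ge 1$, the interval based at $i$ reaches at least to $i+1$ whenever $\min(2n-2i,k)\ge 1$, i.e.\ for every even $i\le n-1$, so consecutive intervals (left endpoints $i$ and $i+2$) always abut; hence $S$ is a single block $[0,U]$ with $U=\max_{i\ \mathrm{even}}\bigl(i+\min(2n-2i,k)\bigr)$. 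Because every such interval ends at or before the centre $P=\lfloor(2n+k)/2\rfloor=n+\lfloor k/2\rfloor$, we have $U\le P$, and the product is strictly unimodal precisely when $U=P$ and otherwise trapezoidal with top of length $(2n+k)-2U$.

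It remains to evaluate $U$. Writing $f(i)=i+\min(2n-2i,k)=(n+\tfrac{k}{2})-\lvert\,i-(n-\tfrac{k}{2})\,\rvert$, a tent function peaking at $i_0=n-k/2$, I would argue as follows. If $k\ge 2n$ then $f$ is decreasing on $[0,n]$, so $U=f(0)=2n$ and the top length is $(2n+k)-4n=k-2n$, the third assertion. If $k<2n$ the maximum over even integers equals $(n+k/2)$ minus the distance from $i_0$ to the nearest even integer; this distance is $0$ when $k$ is even and $n-k/2$ is even, it is $1$ when $k$ is even and $n-k/2$ is odd, and it is $1/2$ when $k$ is odd, giving $U=P$, $U=P-1$, and $U=P$ respectively. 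Since $2n+2-k=2(n-k/2)+2$ for even $k$ and is odd for odd $k$, the condition $U=P$ is exactly ``$2n+2-k$ not divisible by $4$'' while $U=P-1$ (top of length $2$) is exactly ``$2n+2-k$ divisible by $4$'', as claimed.

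The genuinely routine parts are the partition count and the tent-function identity; the one delicate point is the final parity bookkeeping, where one must match the three parity regimes to the single divisibility-by-$4$ criterion and confirm that the even index realising $U$ really stays inside $[0,n]$ (which is where the hypothesis $k<2n$ is used).
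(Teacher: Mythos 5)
Your argument is correct and follows essentially the same route as the paper: both start from the step-pyramid decomposition ${2+n \choose 2,n}_q=\sum_{i}q^{2i}[2n+1-4i]_q$ and apply Proposition~\ref{Proposition 1.5}(2) to locate the highest nonempty row of the product, with the case $k\geq 2n$ reducing to the shape of $[2n+1]_q[k+1]_q$. The paper merely states the resulting parity count where you spell it out via the tent function $f(i)=i+\min(2n-2i,k)$, so your write-up is a more explicit version of the same proof.
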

\begin{proof}
 We compare summands of
$${2+n \choose 2,n}_q= \sum_{i=0}^{\lfloor n/2 \rfloor}q^{2i}[2n+1-4i]_q  $$
with $[k+1]_q$. We see that if $2\leq k+1 \leq 2n+1,$ then the product has a trapezoidal shape with a top of length $0,1,$ or $2$, where
it is $2$ if and only if $2n+1 -(k+1) = 2n-k$ is congruent to $2$ modulo $4$. If $k \geq 2n,$ then ${2+n \choose 2,n}_q [k+1]_q$ has the same
shape as $[2n+1]_q[k+1]_q$ (Lemma \ref{Lemma 2.2} (1)) so it is of the shape of trapezoid with a top of length $k-2n$, as needed.
\end{proof}
As a small illustration of the proposition we compute that
$${12 \choose 2,10}_q [3]_q= $$
$$1 + 2 q + 4 q^2 + 5 q^3 + 7 q^4 + 8 q^5 + 10 q^6 + 11 q^7 + 13 q^8 + 14 q^9 +$$
$$ 16 q^{10} + 16 q^{11} + 16 q^{12} + ...+q^{22}$$
has a trapezoidal shape with a top of length $2$ and is therefore not strictly unimodal.

We now show that if we multiply two Gaussian polynomials of type ${2+n\choose 2,n}_q,$ then we get either a strictly unimodal polynomial or
a polynomial of a trapezoidal shape with a top of length $2$.
\begin{proposition}\label{Proposition 2.4} The product ${2+n \choose 2,n}_q{2+m \choose 2,m}_q$ is
strictly unimodal if $m$ and $n$ have the same parity and is of trapezoidal shape with a top of length $2$ ($3$ terms) if they have different parity.
\end{proposition}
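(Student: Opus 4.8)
The plan is to read both factors through the row decomposition of Proposition~\ref{Proposition 1.5}(2) and to pin down exactly which rows $d_k$ of the product are nonzero. From the identity ${2+n \choose 2,n}_q=\sum_{i=0}^{\lfloor n/2\rfloor}q^{2i}[2n+1-4i]_q$ used in Proposition~\ref{Proposition 2.3}, I would first record that, viewed in $PSU_{2n}$, this Gaussian polynomial has a nonzero row at height $h$ (of length $2n-2h$) precisely when $h$ is even and $0\le h\le 2\lfloor n/2\rfloor$; the same holds for ${2+m \choose 2,m}_q$ with $m$ in place of $n$. Writing the product as $\sum_k d_kq^k[2(n+m)+1-2k]_q$, Proposition~\ref{Proposition 1.5}(2) then says that $d_k\neq 0$ if and only if there are even indices $i,j$ (within the respective ranges) with $k\in[\,i+j,\ i+j+\min(2n-2i,\,2m-2j)\,]$. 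So the entire statement reduces to a covering question: which $k\in[0,n+m]$ lie in one of these intervals.

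Since the product is symmetric in $m$ and $n$, I would assume $1\le n\le m$ and cover the bulk of the range using only the pairs $(i,j)=(0,j)$ with $j$ even and $j\le m-n$. For such $j$ one has $2m-2j\ge 2n$, so the interval is $[\,j,\ j+2n\,]$, and consecutive intervals (as $j$ increases by $2$) overlap because $2n\ge 2$. Taking $j$ up to the largest even integer $J\le m-n$, their union is $[0,\ J+2n]$; one checks $J\le 2\lfloor m/2\rfloor$ so these rows really exist in the second factor. When $m-n$ is even (the same-parity case) $J=m-n$ and the union is all of $[0,\ n+m]$, so every $d_k$ with $k\le n+m$ is nonzero and the product is strictly unimodal. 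When $m-n$ is odd (different parity) $J=m-n-1$, and this construction already covers $[0,\ n+m-1]$.

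It then remains to analyze the single apex row $k=n+m$ in the different-parity case, and this is where the real content lies. I would argue that no admissible pair covers it at all: $k=n+m$ lies in $[\,i+j,\ i+j+\min(2n-2i,2m-2j)\,]$ only if simultaneously $n+m\le 2n-i+j$ and $n+m\le 2m+i-j$, which together force $j-i=m-n$; but $i$ and $j$ are both even, so $j-i$ is even, whereas $m-n$ is odd---a contradiction. Hence $d_{n+m}=0$ while $d_k\neq 0$ for all $k\le n+m-1$, and since the product is palindromic the three central coefficients then coincide, which is exactly a trapezoidal shape with a top of length $2$. The main obstacle, and the crux of the argument, is precisely this parity obstruction at the apex: everything strictly below the center is covered uniformly in both cases, so the two cases can differ only in the middle coefficient, whose presence is governed entirely by whether $m-n$ is even or odd.
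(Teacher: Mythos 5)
Your proof is correct and follows essentially the same route as the paper: decompose each factor ${2+n\choose 2,n}_q=\sum_i q^{2i}[2n+1-4i]_q$ into its rows at even heights and apply Proposition~\ref{Proposition 1.5}(2) to decide which rows $d_k$ of the product are nonzero. The paper states this in one line and leaves the verification implicit, whereas you carry out the interval-covering argument and the parity obstruction at the apex explicitly; both reach the same conclusion.
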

\begin{proof} We use the fact that Gaussian polynomials ${2+n \choose 2,n}_q$ have a shape of a step pyramid.
Separating even and odd cases, we have:
$${2+n \choose 2,n}_q = \sum_{i=0}^{\lfloor n/2 \rfloor}q^{2i}[2n+1-4i]_q = $$
$$ \left \{ \begin{array}{ll}
   [2n+1]_q +q^2[2n-3]_q +...+q^{n}[1]_q  & \mbox{if $n$ is even;} \\
  \ [2n+1]_q +q^2[2n-3]_q +...+q^{n-1}[3]_q  & \mbox{if $n$ is odd.}
\end{array}
              \right. $$
We complete our proof by using Proposition \ref{Proposition 1.5} $(2).$
\end{proof}
We illustrate Proposition \ref{Proposition 2.4} by two examples:
$$ {6 \choose 2,4}_q{4 \choose 2,2}_q=$$
 $$ 1 + 2 q + 5 q^2 + 7 q^3 + 11 q^4 + 12 q^5 + 14 q^6 + 12 q^7 +
 11 q^8 + 7 q^9 + 5 q^{10} + 2 q^{11} + q^{12}.$$
(It is strictly unimodal polynomial).
$$ {4 \choose 2,2}_q{5 \choose 2,3}_q=$$
 $$1 + 2 q + 5 q^2 + 7 q^3 + 10 q^4 + 10 q^5 + 10 q^6 + 7 q^7 + 5 q^8 + 2 q^9 + q^{10}.$$
(The polynomial has a trapezoidal  shape with a top of length $2$.)

\begin{corollary}\label{Corollary 2.5}
The product of three terms  ${2+n \choose 2,n}_q{2+m \choose 2,m}_q{2+k\choose 2,k}_q$ is always strictly unimodal.
\end{corollary}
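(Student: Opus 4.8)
The plan is to isolate two of the three factors whose product is already controlled by Proposition \ref{Proposition 2.4}, and then to feed this product against the third factor through the row-decomposition of Proposition \ref{Proposition 1.5}(2). First I would invoke pigeonhole: among the three integers $m,n,k$ at least two share the same parity, and since the product is symmetric in its three factors I may assume $m\equiv n\pmod 2$. By Proposition \ref{Proposition 2.4} the product
$$A:={2+m\choose 2,m}_q{2+n\choose 2,n}_q$$
is then strictly unimodal, of degree $N=2(m+n)$. Writing $A=\sum_{i=0}^{m+n}b_iq^i[N+1-2i]_q$ as in Proposition \ref{Proposition 1.5}(2), strict unimodality of $A$ is precisely the assertion that $b_i>0$ for every $i$ with $0\le i\le m+n$; this is the feature I will exploit.

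Next I would record the third factor in the same normal form. From the step-pyramid expansion ${2+k\choose 2,k}_q=\sum_{i}q^{2i}[2k+1-4i]_q$ used in the proof of Proposition \ref{Proposition 2.4}, its row coefficients $b'_j$ (in $\sum_j b'_jq^j[2k+1-2j]_q$) equal $1$ exactly when $j$ is even with $0\le j\le k$, and vanish on every odd row. Thus the only possible obstruction to strict unimodality of the full product comes from these missing odd rows.

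Finally I would apply Proposition \ref{Proposition 1.5}(2) to $A\cdot{2+k\choose 2,k}_q$, a polynomial of degree $2(m+n+k)$, and show that its coefficients $d_\ell$ satisfy $d_\ell\neq 0$ for every $0\le \ell\le m+n+k$. By that proposition $d_\ell\neq 0$ as soon as there exist an index $i\in\{0,\dots,m+n\}$ (and every such $i$ is admissible, since all $b_i>0$) and an even $j\in\{0,\dots,k\}$ with $\ell\in[\,i+j,\ i+j+\min(2(m+n)-2i,\,2k-2j)\,]$. For a fixed even $j$, letting $i$ range over all of $\{0,\dots,m+n\}$ produces intervals whose left endpoints increase by $1$ and whose union is the single interval $[\,j,\ \min(j+2(m+n),\,m+n+k)\,]$; since consecutive even values of $j$ differ by $2$ while $m+n\ge 2$, these unions overlap and chain together, and the largest admissible even $j$ (namely $2\lfloor k/2\rfloor\ge k-1\ge k-(m+n)$) pushes the right endpoint up to $m+n+k$. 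Hence the union over all $i$ and all even $j$ is the entire range $[0,m+n+k]$, so every $d_\ell$ is nonzero and the triple product is strictly unimodal.

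The step I expect to be the crux is this last one: because ${2+k\choose 2,k}_q$ has a genuine gap on every odd row, one must check that the full set of nonzero rows of the strictly unimodal factor $A$ ``fills in'' these gaps under multiplication, and in particular that the topmost coefficient (at $\ell=m+n+k$) is reached even when $k$ is odd, where only even rows $j$ are available. This is exactly where strict unimodality of $A$ — not merely its unimodality — is indispensable, and it is also why the naive attempt to replace $A$ by the single Gaussian factor ${2+(m+n)\choose 2,m+n}_q$ fails: that substitute is only strictly unimodal against ${2+k\choose 2,k}_q$ when $m+n\equiv k\pmod 2$.
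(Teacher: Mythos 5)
Your proof is correct and follows essentially the same route as the paper: pigeonhole two of the three factors into the same parity class, apply Proposition \ref{Proposition 2.4} to get a strictly unimodal polynomial $A$, and then multiply by the remaining factor ${2+k \choose 2,k}_q$ via the row decomposition of Proposition \ref{Proposition 1.5}(2). The interval-covering computation you spell out (checking that the full set of nonzero rows of $A$ fills the gaps left by the missing odd rows of ${2+k \choose 2,k}_q$, all the way up to the middle degree $m+n+k$) is precisely the verification that the paper leaves implicit in its one-line assertion that the product of a strictly unimodal polynomial with any ${2+k \choose 2,k}_q$ is strictly unimodal.
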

\begin{proof} Two terms of the product have the same parity, so their product is strictly unimodal. Then we deduce from Proposition \ref{Proposition 1.5} $(2)$
 that the product of a strictly unimodal (PSU) polynomial and any $q$-binomial coefficient ${2+n \choose 2,n}_q$ is strictly unimodal.
\end{proof}

We can also observe that by Proposition \ref{Proposition 1.5} (2), the product of ${2+n \choose 2,n}_q$ ($n\geq 2$) and any Gaussian polynomial
with $5\leq m\leq n$ is strictly unimodal. In the next section we analyze the case of $m=3$ or $4$.

\section{Analysis of  ${3+n\choose 3,n}_{q}$ and ${4+n\choose 4,n}_{q}$}\label{Section 3}
We consider the set $L(m,n)$ which consists of integer sequences of length $m$ denoted by ${\bf a}=(a_{1},\ldots,a_{m})$
such that $0 \leq a_{1} \leq \cdots \leq a_{m} \leq n$
with ordering $(a_{1},\ldots,a_{m}) \leq (b_{1},\ldots,b_{m})$ if $a_{i} \leq b_{i}$ for every $i.$ A chain ${\bf a}_{1}< \cdots < {\bf a}_{k}$
is called \emph{symmetric} if $r({\bf a}_{1})+r({\bf a}_{k})=mn$ where $r({\bf a})=\sum\limits_{i=1}^{m}a_{i}.$
We define a subset $S(m,n)$ of $L(m,n)$ by $S(m,n)=\{(a_{1},\ldots,a_{m}) \in L(m,n) | a_{1}=0 \hbox{~or~} a_{m}=n \}.$
Then the complement of $S(m,n)$ in $L(m,n)$ is isomorphic to $L(m,n-2).$

Notice that each $L(m,n)$ corresponds to a $q$-binomial coefficient ${m+n \choose m,n}_{q}$:
$${m+n \choose m,n}_{q}=\sum\limits_{{\bf a} \in L(m,n)}q^{r({\bf a})}.$$
For an elementary approach to the relation between $L(m,n)$ and $q$-binomial coefficients ${m+n \choose m,n}_{q}$ we refer to \cite{Sta-2}.

Lindstr\"om\cite{Lin} and West\cite{West} introduced symmetric chain decompositions of $S(3,n)$ and $S(4,n),$ respectively.
We will classify the shapes of the $q$-polynomials ${3+n \choose 3,n}_{q}$ and ${4+n \choose 4,n}_{q}$ by using their symmetric
chain decompositions of $L(3,n)$ and $L(4,n),$ respectively.

\begin{lemma}\label{Lemma 3.1}
For $n\geq0,$ $q$-polynomial ${3+n \choose 3,n}_{q}=\sum\limits_{i=0}^{3n} c_{i}q^{i}$ has one of the following forms:
\begin{enumerate}
  \item if $n =2k+1,$ $$c_{0}=c_{1}<\cdots<c_{3k}=c_{3k+1}=c_{3k+2}=c_{3k+3}>\cdots>c_{6k+2}=c_{6k+3},$$
that is the polynomial has an almost trapezoidal shape with a top of length $3$ ($4$ terms),
  \item if $n =4k,$ $$c_{0}=c_{1}<\cdots<c_{6k-2}=c_{6k-1}<c_{6k}>c_{6k+1}=c_{6k+2}>\cdots>c_{12k-1}=c_{12k},$$
and we say that the top of the polynomial has a shape of type $(2,1,2)$, the same shape as ${12\choose 6,6}_q$ (Figure $1.2$),
  \item if $n =4k+2,$ $$c_{0}=c_{1}<\cdots<c_{6k}=c_{6k+1}<c_{6k+2}=c_{6k+3}=c_{6k+4}>c_{6k+5}=c_{6k+6}>\cdots$$ $$>c_{12k+5}=c_{12k+6},$$
and we say that the top of the polynomial has a shape of type $(2,3,2)$, compare Figure $3.1$.
\end{enumerate}
\end{lemma}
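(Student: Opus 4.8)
The plan is to pass from the polynomial to the multiplicities of ``chain bottoms'' and read the shape off their support. By Lindstr\"om's symmetric chain decomposition \cite{Lin} of $L(3,n)$, the poset splits into chains each symmetric about the central rank $3n/2$; a chain with bottom rank $j$ occupies exactly the ranks $j,j+1,\dots,3n-j$ and hence contributes $q^{j}[3n+1-2j]_q$. Collecting the chains by their bottom rank gives
$$\binom{3+n}{3,n}_q=\sum_{j=0}^{\lfloor 3n/2\rfloor} b_j\, q^{j}[3n+1-2j]_q,\qquad b_j=c_j-c_{j-1}\ge 0,$$
where $b_j$ is the number of chains whose bottom sits at rank $j$. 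In the language of Definition \ref{Definition 1.6} the polynomial has a nontrivial row of length $3n-2j$ at height $j$ precisely when $b_j\neq 0$, and a flat spot $c_{j-1}=c_j$ occurs exactly when $b_j=0$. Thus all three assertions reduce to determining the support $\{j\le\lfloor 3n/2\rfloor : b_j\neq0\}$, and in particular the vanishing pattern of $b_j$ near the center.

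To make $b_j$ computable I would reparametrize $L(3,n)$: writing $a_1=x$, $a_2=x+y$, $a_3=x+y+z$ with nonnegative integers $x,y,z$, the conditions $a_3\le n$ and $r(\mathbf a)=j$ become $3x+2y+z=j$ and $x+y+z\le n$, so after eliminating $z\ge0$ one has $c_j=\#\{(x,y)\ge 0 : 3x+2y\le j,\ 2x+y\ge j-n\}$. Comparing the regions for $j$ and $j-1$ (the first inequality relaxes, the second tightens by one step) collapses the difference to two one-dimensional lattice counts,
$$b_j=A_j-B_j,\quad A_j=\#\{0\le x\le\lfloor j/3\rfloor : x\equiv j \pmod 2\},\quad B_j=\Big\lfloor\tfrac{j-1-n}{2}\Big\rfloor+1,$$
valid for $j\le 2n$, with $B_j=0$ when $j\le n$ (there the lower constraint is vacuous).

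With this formula the behavior on the flanks is routine: $A_1=0$ gives $b_1=0$, hence $c_0=c_1$, and for $2\le j\le n$ one checks $A_j\ge 1$ while $B_j=0$, so the sequence increases strictly; in the intermediate band $n<j<\lfloor 3n/2\rfloor$ a direct estimate $A_j-B_j\ge 1$ rules out spurious flat spots. The crux is the evaluation of $b_j$ for $j$ within bounded distance of the center, which I would split by the residue of $n$. For $n=2k+1$ this yields $b_{3k}=1$ and $b_{3k+1}=0$, so the shortest chain reaches height $3k$ with row length $3$, forcing the almost-trapezoidal top of four equal terms. For $n=4k$ one gets $b_{6k-2}=1$, $b_{6k-1}=0$, $b_{6k}=1$: the single length-one chain at the central rank $6k$ sits above a flat pair, producing the $(2,1,2)$ top. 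For $n=4k+2$ one gets $b_{6k}=2$, $b_{6k+1}=0$, $b_{6k+2}=1$, $b_{6k+3}=0$: there is no length-one chain, the top row has length $2$, and the vanishing of $b_{6k+1}$ and $b_{6k+3}$ gives the $(2,3,2)$ shape.

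The main obstacle is precisely this last step. Since $A_j$ depends on $j$ through both $\lfloor j/3\rfloor$ and the parity of $j$, while $B_j$ depends on the parity of $j-n$, the difference $b_j$ is a quasi-linear function whose exact zero set near $3n/2$ must be pinned down case by case across the residues modulo $12$; one must simultaneously confirm that no further zeros of $b_j$ hide on the ascending flanks, so that the only equalities are those listed. Everything else is bookkeeping organized by the decomposition displayed above.
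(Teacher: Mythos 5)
Your framework is sound and your route is genuinely different from the paper's. The paper also starts from Lindstr\"om's symmetric chain decompositions, but it uses them \emph{essentially}: it takes the explicit chains $C_i^n$ of $S(3,n)$ (and, for $n\equiv 0,2\pmod 4$, the chains $C^n, D_i^n, E_i^n$ of $S(3,n)\cup T(3,n)$, whose complement is $L(3,n-4)$), enumerates the multiset of chain lengths occurring in $L(3,n)$, and reads off the missing lengths (e.g.\ for $n=2k+1$: all positive even lengths $\le 6k+4$ except $2$ and $6k+2$), which is exactly the vanishing set of the $b_j$. Your proof, by contrast, only uses the chain decomposition decoratively: the identity $\sum_j b_jq^j[3n+1-2j]_q$ with $b_j=c_j-c_{j-1}$ is pure algebra, and you then compute $b_j$ directly by lattice-point counting. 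I checked your reduction: with $z=j-3x-2y$ eliminated one indeed gets $c_j=\#\{(x,y)\ge 0: 3x+2y\le j,\ 2x+y\ge j-n\}$, the inclusion--exclusion of the two regions gives $b_j=\#\{3x+2y=j,\ 2x+y\ge j-n\}-\#\{3x+2y\le j-1,\ 2x+y=j-n-1\}$, and for $j\le 2n$ both constraints trivialize to your $A_j$ and $B_j$. Your central evaluations ($b_{3k}=1,b_{3k+1}=0$ for $n=2k+1$; $b_{6k-2}=1,b_{6k-1}=0,b_{6k}=1$ for $n=4k$; $b_{6k}=2,b_{6k+1}=0,b_{6k+2}=1,b_{6k+3}=0$ for $n=4k+2$) are all correct. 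What your approach buys is self-containedness --- it needs nothing from \cite{Lin} beyond what you could reprove in two lines --- at the cost of a quasi-linear case analysis at the end; the paper's approach gets the entire support of $b$ in one sweep from the known decompositions.

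One caveat: the sentence claiming $A_j-B_j\ge 1$ throughout the band $n<j<\lfloor 3n/2\rfloor$ is false as stated and contradicts your own central values --- for $n=4k$ the index $j=6k-1$ lies in that band and has $b_j=0$, and likewise $j=6k+1$ for $n=4k+2$. You clearly intend the band to stop a bounded distance short of the center, but that distance and the residue-by-residue verification that no further zeros occur on the ascending flank are precisely where the content of the lemma lives; as written this step is announced rather than carried out, so the argument is a correct plan whose decisive computation still needs to be written down.
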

To illustrate our lemma we see that:
$${6 \choose 3, 3}_q = 1+q +2q^2 + 3q^3 + 3q^4 +3q^5 + 3q^6 + 2q^7 + q^8 +q^9.$$
$${7 \choose 3, 4}_q =  1+q +2q^2 + 3q^3 + 4q^4 +4q^5 + 5q^6 + 4q^7 + 4q^8 +3q^9 + 2q^{10} + q^{11} +q^{12}. $$
$${9  \choose 3,6}_q =  1+ q  + 2 q^2 + 3q^3 + 4 q^4 + 5 q^5 + $$
$$7 q^6 + 7 q^7 + 8 q^8 + 8 q^9 + 8 q^{10} + 7 q^{11} + 7 q^{12} +\ldots + q^{18};$$
the shape with a top of type $(2,3,2)$ as illustrated in Figure $3.1$. \ \\ \ \\

\begin{figure}[h]
\centerline{\psfig{figure=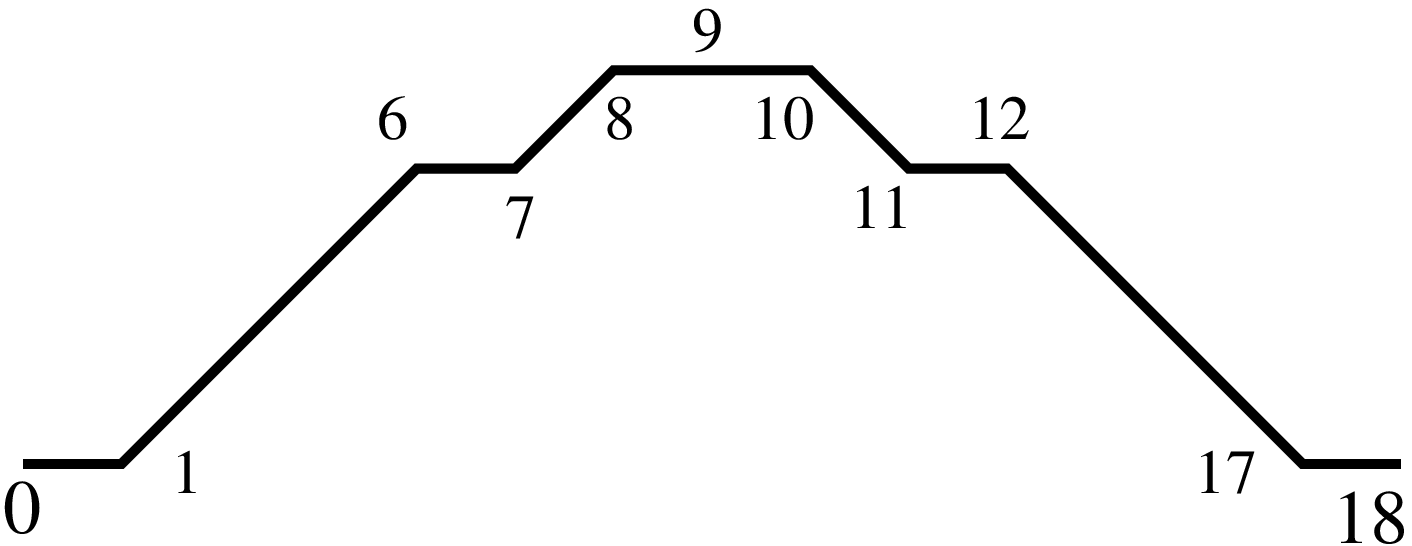,height=2.4cm}}\ \\ \
\centerline{Figure $3.1$: shape of the polynomial ${4k+1\choose 3, 4k-2}_q $; e.g.,  for $k=2$: ${9  \choose 3,6}_q$}
\end{figure}

\begin{proof}
We separate the proof into three cases.\\
Case I. $n =2k+1.$ Lindstr\"om\cite{Lin} showed that $S(3,n)$ can be decomposed into
symmetric chains $C_{i}^{n},$ for $i=0,1,\cdots,k,$ with length $3n-4i+1.$

Since $L(3,n) \setminus S(3,n)$ is isomorphic to $L(3,n-2),$ we have
$$L(3,n)\cong \bigcup\limits_{j=1}^{\frac{n-1}{2}} S(3,2j+1) \cup L(3,1)\cong\bigcup\limits_{j=0}^{\frac{n-1}{2}} S(3,2j+1).$$
We can determine coefficients $c_{i}$'s of $q$-polynomial ${3+n \choose 3,n}_{q}$ by checking lengths of chains $C_{i}^{2j+1}$ because they are
symmetric chains. Notice that the coefficient $c_{i}$ represents the number of integer sequences ${\bf a}$ in $L(3,n)$ satisfying
$r({\bf a})=i.$ Since $|C_{i}^{2j+1}|=6j-4i+4$ and $0 \leq i \leq j \leq k,$ the set of all lengths of $C_{i}^{2j+1}$ in $L(3,n)$
is the same with the set of all positive even integers which are less than or equal to $6k+4$ but neither $2$ nor $6k+2.$ Therefore, we have
$$c_{0}=c_{1}<\cdots<c_{3k}=c_{3k+1}=c_{3k+2}=c_{3k+3}>\cdots>c_{6k+2}=c_{6k+3}.$$
Before we consider the other two cases, we define another subset $T(3,n)$ of $L(3,n)$ by
$T(3,n)=\{(a_{1},a_{2},a_{3}) \in L(3,n) | a_{1}=1 \hbox{~or~} a_{3}=n-1 \}.$
Then the complement of $S(3,n) \cup T(3,n)$ in $L(3,n)$ is isomorphic to $L(3,n-4).$ Thus we have
$$L(3,n)\cong\left\{\begin{array}{ll}
                    \bigcup\limits_{j=0}^{\frac{n}{4}} \{S(3,4j) \cup T(3,4j)\}, & \hbox{if $n=4k$;} \\
                    \bigcup\limits_{j=0}^{\frac{n-2}{4}} \{S(3,4j+2) \cup  T(3,4j+2)\}, & \hbox{if $n=4k+2$.}
                  \end{array}
                \right.$$

Lindstr\"om\cite{Lin} proved that $S(3,n) \cup T(3,n)$ can be decomposed into three types of symmetric chains $C^{n},$ $D_{i}^{n},$ and $E_{i}^{n}$
with lengths $n+1,$ $3n-4i+1,$ and $3n-4i-1,$ respectively:
$$S(3,n) \cup T(3,n)=\{C^{n}\} \cup \{D_{i}^{n} | 0 \leq i \leq \frac{n}{2}-1 \} \cup \{E_{i}^{n} | 1 \leq i \leq \frac{n}{2}-1 \}.$$

Case II. $n=4k.$ Notice that $|C^{4j}|=4j+1,$ $|D_{i}^{4j}|=12j-4i+1,$ and $|E_{i}^{4j}|=12j-4i-1.$ By the same argument with Case I,
if we check the set of all lengths of the symmetric chains $C^{4j},$ $D_{i}^{4j},$ and $E_{i}^{4j}$ in $L(3,n),$ then we can see that it is
equal to $\{2l+1 \in \mathbb{Z} | 0 \leq l \leq 6k \} \backslash \{ 3, 12k-1 \}.$ Thus we have
$$c_{0}=c_{1}<\cdots<c_{6k-2}=c_{6k-1}<c_{6k}>c_{6k+1}=c_{6k+2}>\cdots>c_{12k-1}=c_{12k}.$$
Case III. $n=4k+2.$ Note that $|C^{4j+2}|=4j+3,$ $|D_{i}^{4j+2}|=12j-4i+7,$ and $|E_{i}^{4j+2}|=12j-4i+5.$ Again, since the set of all
lengths of the symmetric chains $C^{4j+2},$ $D_{i}^{4j+2},$ and $E_{i}^{4j+2}$ in $L(3,n)$ is the same
with $\{2l+1 \in \mathbb{Z} | 0 \leq l \leq 6k+3 \} \backslash \{ 1, 5, 12k+5 \},$ we have
$$c_{0}=c_{1}<\cdots<c_{6k}=c_{6k+1}<c_{6k+2}=c_{6k+3}=c_{6k+4}>c_{6k+5}=c_{6k+6}>\cdots$$ $$>c_{12k+5}=c_{12k+6}.$$
\end{proof}

Lemma \ref{Lemma 3.1} combined with Proposition \ref{Proposition 1.5} gives the following useful corollary which we use in Section 4.
\begin{corollary}\label{Corollary 3.2}
The product
$${3+n \choose 3,n}_q{m_2+n_2\choose m_2,n_2}_q \mbox{ for $n\geq 3,\ m_2,n_2 \geq 1$},$$
is strictly unimodal with the following exceptions:
\begin{enumerate}
\item[(1)] $${4k+1 \choose 3,4k-2}_q{4 \choose 2,2}_q \mbox{ for $k\geq 2$ see Figure $3.2$}$$
The product polynomial has a trapezoidal shape with a top of length $2$ ($3$ terms).
\item[(2)] $${4k+1 \choose 3,4k-2}_q[5]_q \mbox{ for $k\geq 2$}.$$
The product polynomial has a trapezoidal shape with a top of length $2$ ($3$ terms).
\item[(3)] $${3+4k \choose 3,4k}_q[3]_q \mbox{ for $k\geq 1$}.$$
The product polynomial has a trapezoidal shape with a top of length $2$ ($3$ terms).
\item[(4)] $${2k \choose 3,2k-3}_q[2]_q \mbox{ for $k\geq 3$}.$$
The product polynomial has a trapezoidal shape with a top of length $2$ ($3$ terms).
\item[(5)] $${3+n \choose 3,n}_q[3n-1]_q \mbox{ for $n\geq 3$}.$$
The product polynomial has a trapezoidal shape with a top of length $2$ ($3$ terms).
\item[(6)] $${3+n \choose 3,n}_q[3n+1+k]_q \mbox{ for $n\geq 3, k\geq 2$}.$$
The product polynomial has a trapezoidal shape with a top of length $k$ ($k+1$ terms).
\end{enumerate}
 \end{corollary}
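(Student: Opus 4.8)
The plan is to push the whole statement through the interval-covering criterion of Proposition~\ref{Proposition 1.5}(2). First I would record the step-basis expansions of both factors. By Lemma~\ref{Lemma 3.1}, writing $N=3n$,
$${3+n \choose 3,n}_q=\sum_{i=0}^{\lfloor N/2 \rfloor} b_i\,q^{i}[N+1-2i]_q,$$
where in every case $b_0=b_2=1$ and $b_1=0$, and the only further vanishing coefficients sit near the top: $b_{3k+1}=0$ if $n=2k+1$, $b_{6k-1}=0$ if $n=4k$, and $b_{6k+1}=b_{6k+3}=0$ if $n=4k+2$. For the second factor I expand ${m_2+n_2 \choose m_2,n_2}_q=\sum_j b'_j q^j[m_2n_2+1-2j]_q$: for a $q$-integer ($m_2=1$) there is the single row $b'_0=1$, and for $m_2=2$ the expansion $\sum_i q^{2i}[2n_2+1-4i]_q$ used in the proof of Proposition~\ref{Proposition 2.4} shows $b'_j\neq0$ exactly on even $j$.

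Setting $D=N+m_2n_2$, Proposition~\ref{Proposition 1.5}(2) writes the product as $\sum_k d_k q^k[D+1-2k]_q$ with $d_k\neq0$ iff $k$ lies in the union of the intervals $[\,i+j,\ i+j+\min(N-2i,\ m_2n_2-2j)\,]$ over all pairs with $b_i,b'_j\neq0$. Hence the product is strictly unimodal iff this union is all of $\{0,1,\dots,\lfloor D/2\rfloor\}$, and in general its top has length $D-2j_{\max}$, where $j_{\max}$ is the largest covered index. Because $b_0=b_2=1$ on the tree side (and $b'_0=1$ always), the widest rows keep the low range $\{0,\dots,\lfloor D/2\rfloor-1\}$ gap-free; the entire question therefore localizes to whether some present row reaches the center $\lfloor D/2\rfloor$. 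The recurring mechanism is that a row of the second factor of width $w$ reaches the center only if the first factor has a \emph{present} row at index $(N-w)/2$, so an exception is produced precisely when this wavefront index lands on one of the near-central zeros listed above.

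I would then split on $m_2$. For $m_2=1$ and $n$ odd or $n\equiv0\pmod4$ the tree factor has exactly two zeros $b_1=0=b_s$, so Corollary~\ref{Corollary 1.8} applies directly: the value $s=3k+1$ yields exception (4) ($[2]_q$), the value $s=6k-1$ yields exception (3) ($[3]_q$), the universal $k+1=N-1$ case yields exception (5) ($[3n-1]_q$), and Corollary~\ref{Corollary 1.7} yields the long-$q$-integer exception (6). For $m_2=1$ and $n\equiv2\pmod4$ Corollary~\ref{Corollary 1.8} no longer applies because of the third zero, and here the decisive near-central gap is at $6k+1=N/2-2$; a $q$-integer $[w+1]_q$ misses the center exactly when $(N-w)/2=N/2-2$, i.e. $w=4$, giving exception (2) ($[5]_q$), while all shorter $q$-integers have their wavefront land on a present row and are strictly unimodal. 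For $m_2=2$ the widest row has width $2n_2$, and the same center computation shows the product is strictly unimodal unless $n\equiv2\pmod4$ and $2n_2=4$, i.e. $n_2=2$: then the width-$4$ main row again lands on the gap at $N/2-2$, producing exception (1), and one checks the shorter rows $j\geq2$ of ${4\choose2,2}_q$ do not reach further.

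Finally, for $m_2=3$ I use Lemma~\ref{Lemma 3.1} for the second factor as well, and for $m_2=4$ West's symmetric chain decomposition; in both cases the second factor already carries a top row of width at least $6$, so the wavefront lands on a present row and no exception survives. For $m_2\geq5$ strict unimodality is immediate from Theorem~\ref{Theorem 1.4} together with Proposition~\ref{Proposition 1.5}, the finitely many Pak--Panova exceptional factors being absorbed by Corollary~\ref{Corollary 1.9}. In each exceptional case I would finish by checking that the covered indices actually form an initial segment $\{0,\dots,j_{\max}\}$ (no internal gaps), so that the shape is a genuine trapezoid and not merely unimodal; this holds because the near-central zeros are always overrun by wider rows from below. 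The main obstacle is the case $n\equiv2\pmod4$: its extra near-central zero at $N/2-2$ is invisible to Corollary~\ref{Corollary 1.8}, and carefully verifying that a width-$4$ wavefront, and only a width-$4$ one, collides with it is exactly what manufactures the two non-generic exceptions (1) and (2).
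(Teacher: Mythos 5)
Your argument is correct and is essentially the paper's own (implicit) proof: the paper derives Corollary~\ref{Corollary 3.2} directly from the shape classification in Lemma~\ref{Lemma 3.1} combined with the interval-covering criterion of Proposition~\ref{Proposition 1.5}(2), which is exactly the machinery you deploy. Your write-up simply supplies the case-by-case details (locating the near-central zeros $b_1$, $b_{3k+1}$, $b_{6k-1}$, $b_{6k+1}$, $b_{6k+3}$ and matching them against the ``wavefront'' index $(N-w)/2$, with Corollaries~\ref{Corollary 1.7} and~\ref{Corollary 1.8} handling the $q$-integer factors) that the paper leaves to the reader.
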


\begin{example}\label{Example 3.3}
$${9 \choose 3,6}_q{4 \choose 2,2}_q= $$
$$1 + 2 q + 5 q^2 + 8 q^3 + 13 q^4 + 18 q^5 + 25 q^6 + 31 q^7 + 38 q^8 + 42 q^9 + $$
$$46 q^{10} + 46 q^{11} + 46 q^{12} +...+q^{22}.$$
(Trapezoidal shape with a top of length $2$ ($3$ terms)).
\end{example}

\begin{figure}[h]
\centerline{\psfig{figure=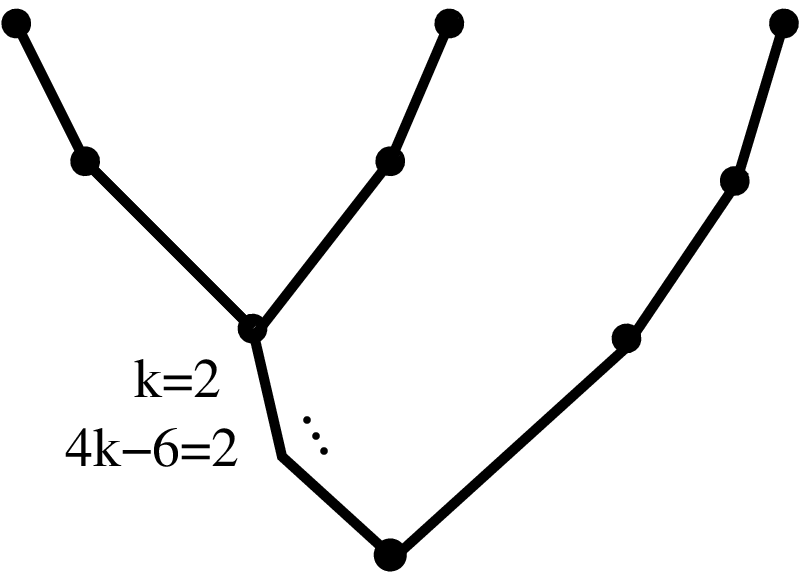,height=2.4cm}}\ \\ \ \\
\centerline{Figure $3.2$: \ $Q(T)= {4k+1 \choose 3,4k-2}_q{4 \choose 2,2}_q$; the case of $4k+1=9$}
\end{figure}

We now concentrate on the Gaussian polynomials ${4+n \choose 4,n}_{q}$.

\begin{lemma}\label{Lemma 3.4}
For $n \in \mathbb{N}\setminus\{1,4\},$ $q$-polynomial ${4+n \choose 4,n}_{q}=\sum\limits_{i=0}^{4n} c_{i}q^{i}$ has the form:
$$c_{0}=c_{1}<\cdots<c_{2n-2}=c_{2n-1}<c_{2n}>c_{2n+1}=c_{2n+2}>\cdots>c_{4n-1}=c_{4n},$$
that is the polynomial with the top of the shape having type $(2,1,2)$,
\end{lemma}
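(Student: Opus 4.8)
The plan is to mirror the proof of Lemma \ref{Lemma 3.1}, replacing Lindström's symmetric chain decomposition of $S(3,n)$ by West's \cite{West} symmetric chain decomposition of $S(4,n)$. The starting observation is that the shape of ${4+n \choose 4,n}_q=\sum_{i=0}^{4n}c_iq^i$ is governed entirely by the multiset of chain lengths in any symmetric chain decomposition of $L(4,n)$. Indeed, since $mn=4n$ is even, every symmetric chain is centered at rank $2n$ and has odd length $\ell=4n+1-2r(\mathbf{a}_1)$; such a chain occupies the $\ell$ consecutive ranks $2n-\tfrac{\ell-1}{2},\ldots,2n+\tfrac{\ell-1}{2}$ and hence contributes $1$ to each $c_i$ with $|i-2n|\le\tfrac{\ell-1}{2}$. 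Consequently, for $0\le t\le 2n$ the difference $c_{2n-t}-c_{2n-t-1}$ equals precisely the number of chains of length $2t+1$ in the decomposition, so it suffices to determine which odd lengths occur.

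To enumerate these lengths I would use the recursive structure recorded at the start of Section \ref{Section 3}: since $L(4,n)\setminus S(4,n)\cong L(4,n-2)$, iterating the isomorphism yields $L(4,n)\cong\bigcup_j S(4,n-2j)$ down to the base $L(4,0)$ (for $n$ even) or $L(4,1)$ (for $n$ odd). For each $j$ one reads off, directly from West's explicit construction (and, if needed, from a finer grouping into subsets analogous to the sets $T(3,n)$ used in Cases II and III of Lemma \ref{Lemma 3.1}), the lengths of all symmetric chains of $S(4,n-2j)$ as explicit functions of the chain index and of $n-2j$. Collecting these over all $j$ produces the full multiset of chain lengths of $L(4,n)$.

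The combinatorial claim to be verified is that this multiset consists of every odd integer in $[1,4n+1]$, each with multiplicity at least one, with the sole exceptions of $3$ and $4n-1$, which never occur; that is, the set of occurring lengths is $\{2l+1:0\le l\le 2n\}\setminus\{3,\,4n-1\}$. Granting this, the contribution formula finishes the proof: the absent length $4n+1-2$ ($t=2n-1$) forces $c_0=c_1$; the absent length $3$ ($t=1$) forces $c_{2n-2}=c_{2n-1}$; the present length $1$ ($t=0$) forces the strict peak $c_{2n-1}<c_{2n}$; and the present lengths $5,7,\ldots,4n-3$ force $c_1<c_2<\cdots<c_{2n-2}$. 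Together with the palindromic symmetry from Theorem \ref{Theorem 1.2}(4), this is exactly the asserted top shape of type $(2,1,2)$.

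The main obstacle is the bookkeeping for West's decomposition, which involves more families of chains than Lindström's and requires tracking their lengths carefully as $n-2j$ ranges over the relevant residues modulo $4$; the delicate point is to show that the two ``holes'' at lengths $3$ and $4n-1$ are never filled by a chain coming from any $S(4,n-2j)$, while every other admissible odd length is filled at least once once $n\ge 2$ and $n\neq 4$. The two excluded values $n\in\{1,4\}$ are genuine exceptions handled by direct computation: ${5\choose 4,1}_q=[5]_q=1+q+q^2+q^3+q^4$ is flat (only the full chain occurs, so there is no chain of length $1$), and ${8\choose 4,4}_q=1+q+2q^2+3q^3+5q^4+5q^5+7q^6+7q^7+8q^8+\cdots$ carries the extra coincidence $c_4=c_5$ (length $7$ is also missing), so neither realizes the generic $(2,1,2)$ pattern.
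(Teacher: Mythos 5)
Your proposal is correct and follows essentially the same route as the paper: it reduces the shape of ${4+n\choose 4,n}_q$ to the multiset of chain lengths in a symmetric chain decomposition of $L(4,n)$ obtained by iterating $L(4,n)\setminus S(4,n)\cong L(4,n-2)$ and applying West's decomposition of $S(4,n)$ into the chains $C_{ij}^{n}$ and $D_{ij}^{n}$, whose lengths $4(n-3i-j)+1$ and $4(n-3i-j)-5$ yield exactly the odd integers in $[1,4n+1]$ other than $3$ and $4n-1$ (no analogue of the sets $T(3,n)$ is actually needed here). The treatment of the exceptions $n\in\{1,4\}$ by direct computation also matches the paper.
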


\begin{proof}
Note that ${5 \choose 4,1}_{q}=1+q+q^{2}+q^{3}+q^{4}= [5]_q$ and
$${8 \choose 4,4}_{q}=$$
$$1+q+2q^{2}+3q^{3}+5q^{4}+5q^{5}+7q^{6}+7q^{7}+8q^{8}+7q^{9}+7q^{10}+5q^{11}+5q^{12}+3q^{13}+2q^{14}+q^{15}+q^{16}.$$

West\cite{West} proved that $S(4,n)$ can be decomposed with two types of symmetric chains $C_{ij}^{n}$ and $D_{ij}^{n}$ of lengths $4(n-3i-j)+1$
and $4(n-3i-j)-5,$ respectively:
$$S(4,n)=\{C_{ij}^{n} | 3i+2j \leq n,~ i,j \geq 0 \} \cup \{D_{ij}^{n} | 3i+2j \leq n-3,~ i,j \geq 0 \}.$$
Since chains $C_{ij}^{n}$ and $D_{ij}^{n}$ are symmetric, we only need to consider their lengths to determine coefficients $c_{i}$'s of $q$-polynomial
${4+n \choose 4,n}_{q}$. Here the coefficient $c_{i}$ represents the number of integer sequences ${\bf a}$ in $L(4,n)$ satisfying $r({\bf a})=i.$

Note that $|C_{ij}^{n}| \equiv 1$ (mod $4$) and $|D_{ij}^{n}| \equiv 3$ (mod $4$).

First, we easily check that the length of any chain cannot be $3$ or $4n-1$ because $|D_{ij}^{n}|=4(n-3i-j)-5$ and $3i+2j \leq n-3.$

Since $L(4,n-2)\cong L(4,n)\setminus S(4,n),$ we have
$$L(4,n)\cong\left\{
                  \begin{array}{ll}
                    \bigcup\limits_{k=1}^{\frac{n}{2}} S(4,2k) \cup L(4,0)\cong\bigcup\limits_{k=0}^{\frac{n}{2}} S(4,2k), & \hbox{if $n$ is even;} \\
                    \bigcup\limits_{k=1}^{\frac{n-1}{2}} S(4,2k+1) \cup L(4,1)\cong\bigcup\limits_{k=0}^{\frac{n-1}{2}} S(4,2k+1), & \hbox{if $n$ is odd.}
                  \end{array}
                \right.$$

Then the set of all lengths of $C_{ij}^{n}$ and $D_{ij}^{n}$ in $L(4,n)$ is the same as the set of all positive odd integers which are less than or equal
to $4n+1$ but neither $3$ nor $4n-1$ if $n \in \mathbb{N}\setminus\{1,4\}.$ Therefore
$$c_{0}=c_{1}<\cdots<c_{2n-2}=c_{2n-1}<c_{2n}>c_{2n+1}=c_{2n+2}>\cdots>c_{4n-1}=c_{4n}.$$
\end{proof}

Lemma \ref{Lemma 3.4} combined with Proposition \ref{Proposition 1.5} gives the following useful corollary which we use in Section 4.
\begin{corollary}\label{Corollary 3.5}
The product polynomial
$${4+n \choose 4,n}_q{m_2+n_2\choose m_2,n_2}_q \mbox{ for $n\geq 4,\ m_2,n_2 \geq 1$},$$
is strictly unimodal with the following exceptions:
\begin{enumerate}
\item[(1)] $${8 \choose 4,4}_q{5 \choose 2,3}_q.$$
The product polynomial has a trapezoidal shape with a top of length $2$ ($3$ terms).
\item[(2)] $${8 \choose 4,4}_q[7]_q.$$
The product polynomial has a trapezoidal shape with a top of length $2$ ($3$ terms).
\item[(3)]$${4+n \choose 4,n}_q[3]_q \mbox{ for $n\geq 4$},$$
The product polynomial has a trapezoidal shape with a top of length $2$ ($3$ terms).
\item[(4)] $${4+n \choose 3,n}_q[4n-1]_q \mbox{ for $n\geq 4$}.$$
The product polynomial has a trapezoidal shape with a top of length $2$ ($3$ terms).
\item[(5)] $${4+n \choose 3,n}_q[4n+1+k]_q \mbox{ for $n\geq 4, k\geq 2$}.$$
The product polynomial has a trapezoidal shape with a top of length $k$ ($k+1$ terms).
\end{enumerate}
 \end{corollary}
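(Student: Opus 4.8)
The plan is to convert the whole statement into a search for nonzero rows, in the spirit of the proofs of Propositions 2.3 and 2.4. Assume $m_2\le n_2$ and write both factors in their row form
$${4+n\choose 4,n}_q=\sum_{i=0}^{2n}b_iq^i[4n+1-2i]_q,\qquad {m_2+n_2\choose m_2,n_2}_q=\sum_j b'_jq^j[m_2n_2+1-2j]_q.$$
By Proposition \ref{Proposition 1.5}(2) the product is $\sum_k d_kq^k[D+1-2k]_q$ with $D=4n+m_2n_2$, and the top index $k_{\max}=\max\{k:d_k\neq0\}$ equals the maximum of $f(i,j)=i+j+\min(4n-2i,\,m_2n_2-2j)=\min(4n-i+j,\,m_2n_2+i-j)$ over rows with $b_i,b'_j\neq0$. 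The two arguments of this minimum sum to $D$, so $f\le\lfloor D/2\rfloor$, with equality exactly along the diagonal $i-j=2n-\lfloor m_2n_2/2\rfloor$; indeed every pair $(2n-t,\lfloor m_2n_2/2\rfloor-t)$ yields $f=\lfloor D/2\rfloor$. Since $b_0,b'_0\neq0$ and the vanishing rows are isolated, the lower rows are automatically present, so the product is strictly unimodal exactly when $k_{\max}=\lfloor D/2\rfloor$, and is otherwise trapezoidal with a top of length $D-2k_{\max}$. The corollary thus reduces to deciding whether the diagonal meets a pair of nonzero rows.

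The row data I would feed in are Lemma \ref{Lemma 3.4} and the explicit expansion in its proof. For $n\ge5$ Lemma \ref{Lemma 3.4} gives $b_i\neq0$ for all $i\in\{0,\dots,2n\}$ except $i=1,2n-1$; the case $n=4$ is excluded from Lemma \ref{Lemma 3.4}, and from ${8\choose4,4}_q=1+q+2q^2+3q^3+5q^4+5q^5+\cdots$ one reads $b_i\neq0$ for $i\in\{0,\dots,8\}$ except $i=1,5,7$. I would therefore split into $n\ge5$ and $n=4$. When $m_2=1$ the second factor is $[n_2+1]_q$, a single row $b'_0=1$; for $n\ge5$ the polynomial $P={4+n\choose4,n}_q$ satisfies the hypotheses of Corollary \ref{Corollary 1.8} with $b_1=b_{2n-1}=0$, and Corollaries \ref{Corollary 1.7} and \ref{Corollary 1.8} give strict unimodality except for $n_2=2$ (exception (3), $[3]_q$), $n_2=4n-2$ (exception (4), $P[4n-1]_q$), and $n_2\ge4n+2$ (exception (5), $P[4n+1+k]_q$ with $k=n_2-4n$, via Corollary \ref{Corollary 1.7}(1)). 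For $n=4$ the same computation with the extra defects $5,7$ shows the maximizing index lands on a defect precisely for $n_2\in\{2,6,14\}$, producing $[3]_q,[7]_q,[15]_q$, i.e. exceptions (3),(2),(4), with (5) again for $n_2\ge18$. (Here exceptions (4) and (5) are the binomial ${4+n\choose4,n}_q$, the ``$3$'' in the statement being a typo for ``$4$''.)

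For $m_2\ge5$ I would use Theorem \ref{Theorem 1.4}. If ${m_2+n_2\choose m_2,n_2}_q$ is almost strictly unimodal its peak row $b'_{\lfloor m_2n_2/2\rfloor}$ is nonzero, so the pairing $t=0$, namely $(2n,\lfloor m_2n_2/2\rfloor)$, already reaches $\lfloor D/2\rfloor$ and the product is strictly unimodal; if it is one of the nine exceptions I would apply Corollary \ref{Corollary 1.9} with ${4+n\choose4,n}_q$ playing the role of the binomial ${m+n\choose m,n}_q$ ($m=4\le n$), noting that this binomial is not a $q$-integer for $n\ge4$, so none of the three patterns of Corollary \ref{Corollary 1.9} occurs. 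The cases $m_2=3,4$ are similar and produce no exceptions: for $m_2=4$ the factor has a nonzero peak row (the $(2,1,2)$ shape, or the $n_2=4$ shape), so $t=0$ works; for $m_2=3$ the peak row vanishes only when $n_2$ is odd or $n_2\equiv2\pmod4$ (Lemma \ref{Lemma 3.1}), and then walking down the diagonal reaches a pair of nonzero rows within $t\le4$ steps, since the few defects of $P$ and of the second factor cannot cover all of $t=0,1,2,3,4$; the finitely many smallest pairs are checked directly.

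The genuinely delicate case, and the source of the only new sporadic exception, is $m_2=2$. By Proposition \ref{Proposition 2.4} the factor ${2+n_2\choose 2,n_2}_q$ has \emph{only even} nonzero rows, so on the diagonal $i-j=2n-n_2$ the step $t$ is forced to be odd when $n_2$ is odd; then $t=1$ and $t=3$ give $i=2n-1$ and $i=2n-3$. For $n\ge5$ only $2n-1$ is a defect, so $t=3$ succeeds and the product is strictly unimodal; but for $n=4$ the indices $2n-1=7$ and $2n-3=5$ are both defects of ${8\choose4,4}_q$, and when moreover $n_2=3$ the next candidate $t=5$ already forces $j<0$, so every diagonal point is blocked. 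This single configuration $(n,n_2)=(4,3)$ is exactly exception (1), with $k_{\max}=\lfloor D/2\rfloor-1$ and top length $2$; for $n_2$ even the pairing $t=0$ succeeds, and for all other odd $n_2$ the point $t=5$ succeeds. The main obstacle is precisely this parity-and-defect bookkeeping for $m_2=2,3,4$: one must verify uniformly that, outside the listed small pairs, the diagonal always hits two nonzero rows, and must isolate the sporadic pair where the even-row sparsity of the $m_2=2$ factor and the extra defects of ${8\choose4,4}_q$ conspire. I would organize this by the residue of $n_2$ modulo $4$ dictated by Lemmas \ref{Lemma 3.1} and \ref{Lemma 3.4}, reducing each class to a short interval check together with a handful of explicit small computations.
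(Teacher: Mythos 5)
Your proposal is correct and uses essentially the same mechanism as the paper: the row decomposition of ${4+n\choose 4,n}_q$ from Lemma \ref{Lemma 3.4} (missing rows at heights $1$ and $2n-1$, plus $5$ and $7$ when $n=4$) combined with Proposition \ref{Proposition 1.5}(2) to detect when the top row of the product vanishes; you simply carry out explicitly, via the diagonal search for a pair of nonzero rows, the case analysis the paper compresses into ``other cases are treated similarly,'' and you correctly read items (4) and (5) as referring to ${4+n\choose 4,n}_q$. The only cosmetic quibble is that for $m_2n_2$ odd the maximum $\lfloor D/2\rfloor$ is attained on two adjacent diagonals rather than ``exactly'' one, but since the diagonal you use always realizes $\lfloor D/2\rfloor$ this does not affect any conclusion.
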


\begin{proof}
Lemma \ref{Lemma 3.4} follows directly from Proposition \ref{Proposition 1.5}
but we will do well a little longer on the case of ${8 \choose 4,4}_q$. We can express it as
$${8 \choose 4,4}_{q}=$$
$$[17]_q + q^2[13]_q + q^3[11]_q+ 2q^4[9]_q +2q^{6}[5]_q + q^8[1]_q.$$
We can draw the shape of the polynomial as in Figure $3.3.$

\begin{figure}[h]
\centerline{\psfig{figure=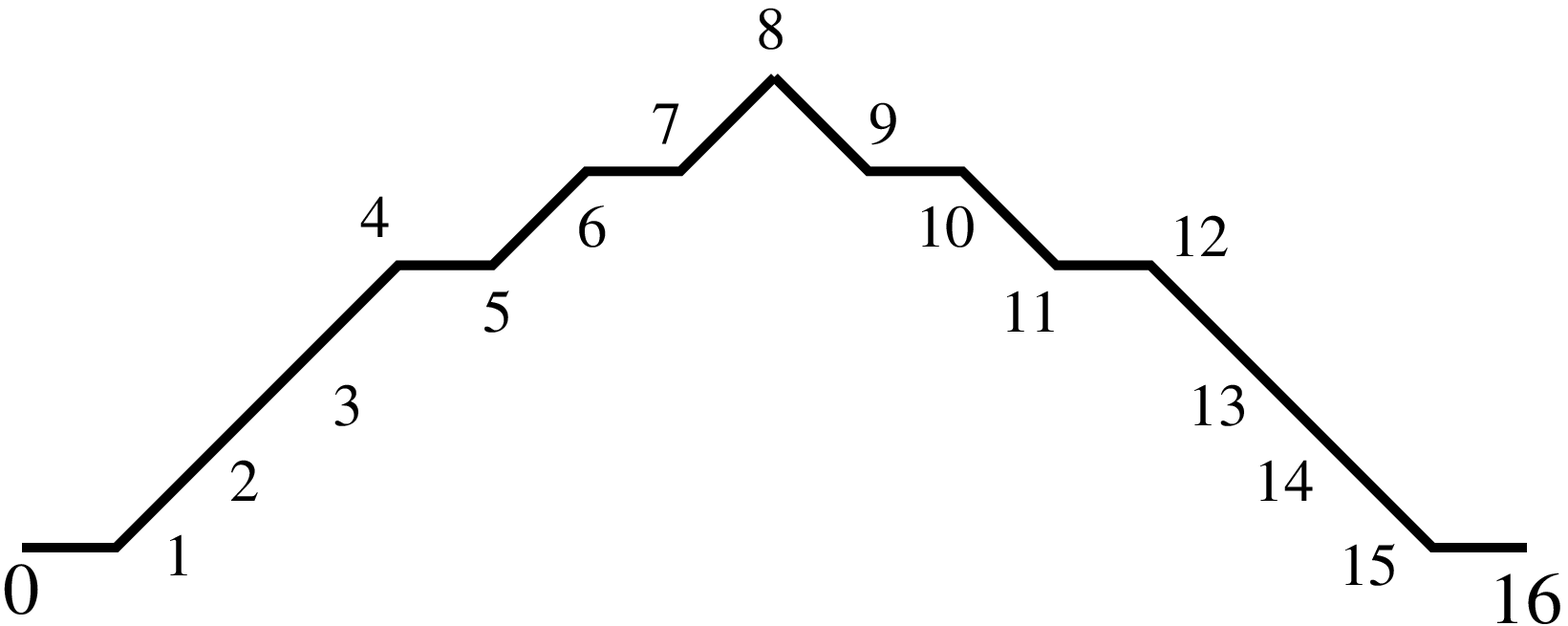,height=3.6cm}}\ \\ \ \\
\centerline{Figure $3.3$: the shape of ${8 \choose 4,4}_{q}$, the rows of high $1$, $5$ and $7$ are missing}
\end{figure}

To have strict unimodality we had to have rows corresponding to $q[15]_q$, $q^5[7]_q$ and $q^7[3]_q$.
Thus clearly, by Proposition \ref{Proposition 1.5} ${8 \choose 4,4}_q[15]_q$, ${8 \choose 4,4}_q[7]_q$ and ${8 \choose 4,4}_q[3]_q$ have
a trapezoidal shape with a top of length $2$.
More interestingly, as $[7]_q+ q^2[3]_q= {5 \choose 2,3}_q$ so, by Proposition \ref{Proposition 1.5}, ${8 \choose 4,4}_q {5 \choose 2,3}_q$ is
also of a trapezoidal shape of top of length $2$. Other cases are treated similarly.
\end{proof}

\section{The main algebraic result}\label{Section 4}

We are ready to combine the results from previous sections to decide which products of Gaussian polynomials are strictly unimodal  and show that
all nontrivial products (more than one factor different from $1$) are of trapezoidal shape (if they are not strictly unimodal, then we
give the length of top of the trapezoid). Our algebraic results are summarized in the theorem below.

\begin{theorem}\label{Theorem 4.1}\
 Consider a nontrivial product (at least two factors different from $1$) of $q$-binomial coefficients:
$$P(q) = P_1(q)P_2(q)\cdots P_k(q), \mbox{ $k\geq 2$},$$
$$ \mbox{where } P_i(q) = {m_i+n_i \choose m_i,n_i}_q, \ 1\leq m_i \leq n_i \geq 2.$$
Then:
\begin{enumerate}
\item[(1)] The  product $P(q)$ is always of a trapezoidal shape.
\item[(2)] If the product has no $q$-integer factors, then it is always strictly unimodal except ${8 \choose 4,4}_q{5 \choose 2,3}_q$,
${4k+1 \choose 3,4k-2}_q{4 \choose 2,2}_q$, or if we multiply
two factors of type ${2+n \choose 2,n}_q$ with different parity of $n$ (Proposition \ref{Proposition 2.4}).
In these cases the top of the resulting trapezoid has length $2$.
\item[(3)] If the product has exactly one $q$-integer factor $[n+1]_q$, the product $P(q)$ is strictly unimodal with exceptions
of the cases of $2n \geq deg(P(q)) +2$ which have a trapezoidal shape with a top of length $2n- deg(P(q))$ and products
of the form $[n+1]_q{m_2+n_2\choose m_2,n_2}_q$, $2\leq m_2$ listed in Proposition \ref{Proposition 2.3}, Corollary \ref{Corollary 3.2}, and
Corollary \ref{Corollary 3.5}.
\item[(4)] The case of all factors being $q$-integers is described in Proposition  \ref{Proposition 2.1}. (See also Corollary \ref{Corollary 5.5}.)
\item[(5)]  All other cases when $P(q)$ is not strictly unimodal can be characterized as follows:\\
Let $[n+1]_q$ be the largest $q$-integer factor of $P(q)$ and let $P(q)=[n+1]_q\hat P(q)$. Assume also that
$n \geq deg(\hat P(q))+ 2$. Then $P(q)$ has a trapezoidal shape with a top of length $n- deg (\hat P(q))$.
\end{enumerate}
\end{theorem}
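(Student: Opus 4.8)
The plan is to translate everything into the \emph{row calculus} furnished by Proposition \ref{Proposition 1.5}(2). I would write each factor $P_i(q)=\binom{m_i+n_i}{m_i,n_i}_q$ (with $N_i=m_in_i$) in the form $\sum_{\ell}b^{(i)}_\ell q^\ell[N_i+1-2\ell]_q$, and recall that a positive symmetric unimodal polynomial is trapezoidal exactly when its set of \emph{nonzero rows} $\{\ell:b_\ell\neq 0\}$ is an initial segment $\{0,1,\dots,j\}$, is strictly unimodal exactly when $j=\lfloor N/2\rfloor$, and in general has a top of length $N-2j$. Two structural facts drive the whole argument: every factor has $b^{(i)}_0=1\neq 0$ (its constant term is $1$), and every factor has all gaps between consecutive nonzero rows of size at most $2$ — this follows from Pak--Panova (Theorem \ref{Theorem 1.4}) for $m_i\geq 5$, and from Lemmas \ref{Lemma 3.1} and \ref{Lemma 3.4} together with the step-pyramid shape of $\binom{2+n}{2,n}_q$ for $m_i\leq 4$.

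With this language I would prove part (1) by induction on $k$ using the interval rule of Proposition \ref{Proposition 1.5}(2). The key point is that the combination of the two bottom rows ($\ell=0$ in each factor, both nonzero) already covers the whole interval $[0,\min_i N_i]$, while for any nonzero row $\ell$ of a partial product, pairing it with the bottom row of the next factor covers $\{\ell,\ell+1\}$ (since $N_i\geq 2$). Because every factor has row gaps at most $2$, these one-step spreads close up all internal gaps, so the nonzero rows of the product form an initial segment. Once a genuine trapezoid has formed, Lemma \ref{Lemma 2.2} and Corollary \ref{Corollary 1.7} guarantee it stays trapezoidal under further multiplication, which yields (1).

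Parts (2), (3) and (4) I would then extract by a case split on the number $t$ of $q$-integer factors. For $t=k$ the assertion is exactly Proposition \ref{Proposition 2.1}, giving (4). For $t=0$ every $m_i\geq 2$: any pairwise product is strictly unimodal apart from the catalogued exceptions $\binom{8}{4,4}_q\binom{5}{2,3}_q$, $\binom{4k+1}{3,4k-2}_q\binom{4}{2,2}_q$, and two factors $\binom{2+n}{2,n}_q$ of opposite parity (Proposition \ref{Proposition 2.4}, Corollaries \ref{Corollary 2.5}, \ref{Corollary 3.2}, \ref{Corollary 3.5}, and Theorem \ref{Theorem 1.4} for $m_i\geq 5$), and one checks that a third nontrivial factor always fills the residual length-$2$ top; this gives (2). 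For $t=1$ one isolates the single integer $[n+1]_q$ and compares $2n$ with $\deg P(q)$, invoking Proposition \ref{Proposition 2.3} and Corollaries \ref{Corollary 3.2}, \ref{Corollary 3.5} for the pieces $[n+1]_q\binom{m_2+n_2}{m_2,n_2}_q$; this gives (3).

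Finally, part (5) is the clean capstone of the row calculus, and I do not expect it to be the obstacle. Writing $P(q)=[n+1]_q\hat P(q)$ with $[n+1]_q$ the largest $q$-integer factor and $M=\deg\hat P(q)$, the polynomial $[n+1]_q$ has a single nonzero row at height $0$, while $\hat P(q)\in PSU_M$ has bottom row equal to $1$. By Proposition \ref{Proposition 1.5}(2) row $j$ of $\hat P(q)$ contributes the interval $[j,\,j+\min(n,M-2j)]$; since $n\geq M+2$ forces $\min(n,M-2j)=M-2j$ for every $j\geq 0$, this interval equals $[j,M-j]\subseteq[0,M]$, and the $j=0$ interval is exactly $[0,M]$. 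Hence the nonzero rows are precisely $\{0,\dots,M\}$, the largest nonzero row is $M<\lfloor(n+M)/2\rfloor$, and the top length is $(n+M)-2M=n-\deg\hat P(q)$, as claimed. The real work, I expect, lies in parts (1)--(3): verifying uniformly that each factor type has row gaps at most $2$, confirming that the spread argument closes every internal gap (including the length-$2$ tops produced by the Pak--Panova and $m\leq 4$ exceptions), and checking that each exceptional product matches a previously tabulated case and that adjoining one more nontrivial factor removes the exception.
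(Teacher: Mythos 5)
Your proposal is correct and follows essentially the same route as the paper: the paper's own ``proof'' of Theorem \ref{Theorem 4.1} is a single sentence deferring to the earlier results, and you assemble exactly those ingredients (the interval rule of Proposition \ref{Proposition 1.5}(2), Propositions \ref{Proposition 2.1}, \ref{Proposition 2.3}, \ref{Proposition 2.4}, Corollaries \ref{Corollary 2.5}, \ref{Corollary 3.2}, \ref{Corollary 3.5}, and Theorem \ref{Theorem 1.4}), so if anything your write-up is more explicit than the paper's. One caveat on your part (1): the claim that pairing a nonzero row $\ell$ with the bottom row of the next factor covers $\{\ell,\ell+1\}$ requires $\min(N-2\ell,N')\geq 1$, which fails exactly when $\ell=N/2$ is the middle row of the partial factor --- and that case does occur, e.g.\ for $\binom{8}{4,4}_q$ (row $8$, $N=16$) or $\binom{12}{6,6}_q$ (row $18$, $N=36$). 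Bottom-row pairings alone therefore do not reach the rows between $N/2+1$ and $\lfloor(N+N')/2\rfloor$; you must also invoke pairings of interior rows $(i,j)$, for which the interval $[i+j,\,i+j+\min(N-2i,N'-2j)]$ reaches height $\tfrac{N+N'}{2}-\tfrac{1}{2}\lvert(N-2i)-(N'-2j)\rvert$, and the gap-at-most-$2$ property of both row sets guarantees suitable $(i,j)$ exist. This is a repair within your own framework rather than a change of approach, but as stated the spread argument is incomplete at the top of the trapezoid.
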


\begin{proof}
We already proved all of the main ingredients needed to demonstrate Theorem \ref{Theorem 4.1}.
\end{proof}

\section{Tree realization and future plans}\label{Section 5}

Our original problem was to characterize those rooted trees whose plucking polynomials are not strictly unimodal. We are not interested
in Gaussian polynomials ${b+a \choose b,a}_q$ (analyzed carefully in \cite{Pak-Pan} and follow up papers \cite{Dha,Zan}) which are plucking polynomials
of trees with one splitting (Figure $5.1$). Such a tree after reduction\footnote{We say that a tree is \emph{reduced} if it has no string, that is its root has degree different from $1.$ The reduction of the given tree is the reduced tree $T$ obtained from $T$ by cutting its string.} is denoted by $T_{b,a}$.
From our main algebraic result, Theorem \ref{Theorem 4.1}, we obtain:

\begin{corollary}
Every tree which is different from $T_{b,a}$ with a string (Figure $5.1$) has a plucking polynomial $Q(T)$ of a trapezoidal shape.
\end{corollary}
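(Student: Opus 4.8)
The plan is to deduce the statement from the main algebraic result, Theorem \ref{Theorem 4.1}(1), by translating the combinatorial structure of $T$ into the factorization of $Q(T)$ supplied by the state product formula of Theorem \ref{Theorem 1.2}(3). First I would write $Q(T)=\prod_{v\in V(T)}W(v)$, where each weight $W(v)$ is the $q$-multinomial coefficient attached to the wedge decomposition $T^v=T^v_{k_v}\vee\cdots\vee T^v_1$. The key observation is that $v$ contributes a nontrivial weight ($W(v)\neq 1$) exactly when $k_v=\deg_{T^v}(v)\geq 2$, that is, when $v$ is a splitting (branching) vertex; every vertex with $k_v=1$ gives $W(v)={E\choose E}_q=1$ and contributes nothing. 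Consequently the shape of $Q(T)$ is controlled entirely by the splitting vertices of $T$, and any string below or between splittings inserts only trivial factors.

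Next I would organize the argument by the number and degrees of the splitting vertices. If $T$ has no splitting vertex (a single vertex or a string), then $Q(T)=1$, which is trivially trapezoidal. If $T$ has exactly one splitting vertex and that vertex has degree exactly $2$ in the subtree growing from it, then its two branches are necessarily strings, the reduced tree is $T_{b,a}$ for some $b,a$, and $Q(T)={b+a\choose b,a}_q$; this is precisely the excluded family (possibly carrying a string below the splitting). In every remaining case I claim that $Q(T)$ is a nontrivial product of at least two $q$-binomial coefficients different from $1$.

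To verify the claim I would invoke Theorem \ref{Theorem 1.2}(2), which decomposes each multinomial weight into Gaussian polynomials
$${E_k+\cdots+E_1\choose E_k,E_{k-1}+\cdots+E_1}_q\cdots{E_2+E_1\choose E_2,E_1}_q,$$
each factor being nontrivial whenever both of its entries are positive. If $T$ has two or more splitting vertices, then at least two distinct weights are already nontrivial. If $T$ has a single splitting vertex but of degree $k_v\geq 3$, then its weight alone decomposes into at least two nontrivial Gaussian factors, since each $E_i\geq 1$. In either situation $Q(T)$ is a product of at least two $q$-binomial coefficients with all parameters $\geq 1$, so Theorem \ref{Theorem 4.1}(1) applies and yields that $Q(T)$ has a trapezoidal shape.

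The main difficulty here is bookkeeping rather than any genuinely hard estimate: one must confirm that the hypothesis ``$T$ is different from $T_{b,a}$ with a string'' matches exactly the condition that $Q(T)$ is a nontrivial product, i.e. that a single splitting of degree $2$ is the only configuration producing a lone Gaussian factor and that strings never contribute nontrivial factors. Once this dictionary between the shape of $T$ and the factorization of $Q(T)$ is established, Theorem \ref{Theorem 4.1}(1) carries out the rest of the proof, and no further analysis of the Pak--Panova exceptional cases is needed because those can only arise from the excluded one-splitting trees.
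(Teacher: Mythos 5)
Your argument is correct and is essentially the paper's own route: the corollary is deduced directly from Theorem \ref{Theorem 4.1}(1), and the dictionary you spell out via the state product formula (only splitting vertices contribute nontrivial weights, a lone degree-$2$ splitting gives exactly the excluded family $T_{b,a}$ with a string, and every other tree yields a product of at least two nontrivial Gaussian factors) is precisely what the paper leaves implicit. The one pedantic caveat is that factors $\binom{2}{1,1}_q=[2]_q$ fall outside the literal hypothesis $1\leq m_i\leq n_i\geq 2$ of Theorem \ref{Theorem 4.1}, but products involving them are still trapezoidal by Proposition \ref{Proposition 2.1} and Proposition \ref{Proposition 1.5}, so nothing is lost.
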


\begin{figure}[h]
\centerline{\psfig{figure=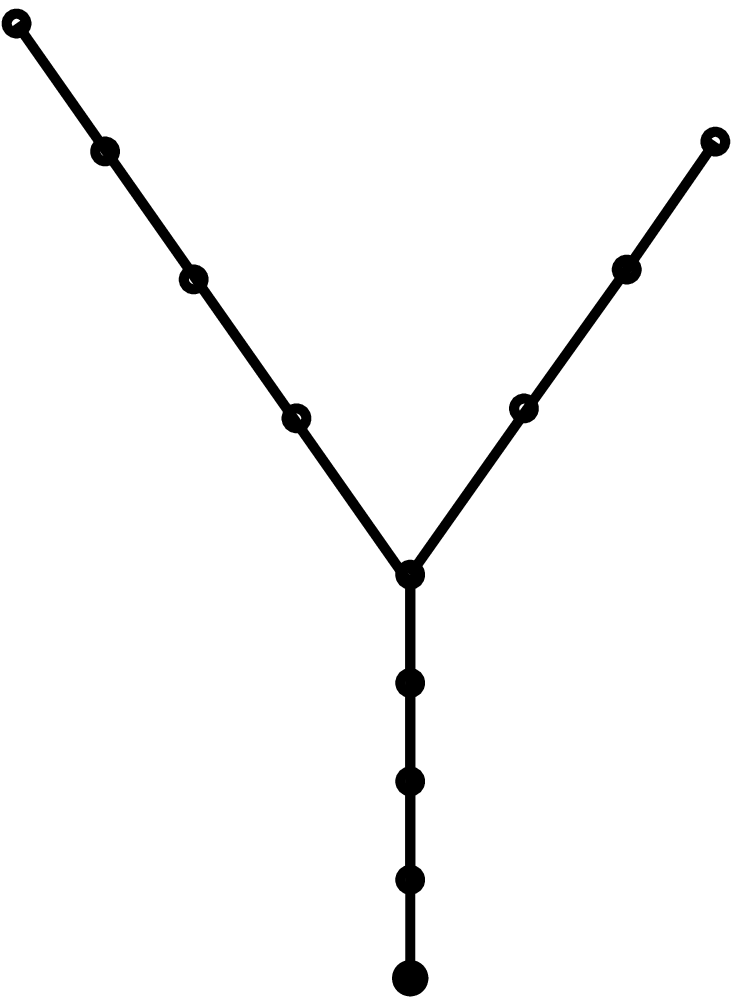,height=3cm}}
\centerline{Figure $5.1$: \ A tree of type $T_{b,a}$ with a string}
\end{figure}

Consider a tree $T$ whose reduction is not equal to $T_{b,a}.$ If we would like to decide whether $Q(T)$ is strictly unimodal and if it is not what the length of the top of the corresponding trapezoidal shape is, we can decompose $Q(T)$ into the product of Gaussian polynomials (not necessarily unique), as in Theorem \ref{Theorem 1.2}, and then use Theorem \ref{Theorem 4.1}.

We can however ask further questions:
\begin{enumerate}
\item[(1)] Which products of Gaussian polynomials can be realized by trees as $Q(T)$?,
\item[(2)] To what extent is the realization unique?
\end{enumerate}

We answer the first question in Theorem \ref{Theorem 5.6}. We discuss the second question in Subsection $5.2.$

\subsection{Realizations}

We start from the rather pleasing criterion for the product of Gaussian polynomials to be realized as $Q(T)$ for some $T.$ This will show, in particular, that ${8 \choose 4,4}_q{5 \choose 2,3}_q$ of Theorem \ref{Theorem 4.1} cannot be realized as the plucking polynomial of any tree.
We consider polynomials $P(q)$ which can be represented by a fraction for which the numerator and denominator are products of $q$-integers. Every plucking polynomial can be written in this form. We denote the numerator and denominator of $P(q)$ by $N(P(q))$ and $D(P(q))$, respectively.

\begin{definition}
A $q$-polynomial $P(q)=\frac{[a_{1}]_{q}\cdots [a_{k}]_{q}}{[b_{1}]_{q}\cdots [b_{l}]_{q}}$ is in a reduced form if $a_{i} \neq b_{j}$ for any $i,j.$
\end{definition}

It is well known that the reduced form is unique.

\begin{theorem}\label{Theorem 5.3}
Assume that the plucking polynomial $Q(T)$ of a rooted tree $T$ is in a reduced form. Then
\begin{enumerate}
  \item $N(Q(T))$ divides $[|E(T)|]_{q}!$ in $q$-integer sense (i.e. $N(Q(T))$ is composed of $q$-integer factors of $[|E(T)|]_{q}!$).
In particular, $N(Q(T))$ is square free,
  \item if $T=T_{1} \vee \cdots \vee T_{k}$ and $k \geq 2,$ then $N(Q(T))$ contains $q$-integer factors $[d]_{q}$ where $|E(T)| \geq d > \max\limits_{1 \leq i\leq k}(E_{i}).$ In particular, $|E(T)|$ is the greatest $q$-integer in $N(Q(T)).$
\end{enumerate}
\end{theorem}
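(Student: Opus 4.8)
The plan is to extract everything from the State Product Formula of Theorem~\ref{Theorem 1.2}(3), which writes $Q(T)=\prod_{v\in V(T)}\frac{[E(T^v)]_q!}{\prod_{j}[E(T^v_j)]_q!}$ as a single ratio of products of $q$-integers. I will compute, for each $d\ge 1$, the net exponent $\mu_d$ with which the atom $[d]_q$ occurs in $Q(T)$, and read both parts of the theorem off the sign and size of $\mu_d$. Since $[m]_q!=\prod_{e=1}^{m}[e]_q$, the atom $[d]_q$ divides $[m]_q!$ exactly once when $m\ge d$ and not at all otherwise. Writing $E=|E(T)|$ and $f(d)=\#\{v\in V(T):E(T^v)\ge d\}$, the numerators of the formula therefore contribute $[d]_q$ with total exponent $f(d)$, while the denominators contribute it with exponent $\sum_{v}\#\{j:E(T^v_j)\ge d\}$.

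The decisive step is to reindex this denominator sum. In the wedge decomposition $T^v=T^v_{k_v}\vee\cdots\vee T^v_1$, each branch $T^v_j$ consists of the edge from $v$ to one of its children $w$ together with $T^{w}$, so that $E(T^v_j)=1+E(T^{w})$ and the pairs $(v,j)$ biject with the non-root vertices $w$ of $T$. Hence the denominator exponent equals $\#\{w\neq\mathrm{root}:E(T^{w})\ge d-1\}$, which is $f(d-1)$ minus the contribution of the root; since $E(T^{\mathrm{root}})=E$, this contribution is $1$ for all $1\le d\le E+1$. I thus obtain the clean formula $\mu_d=f(d)-f(d-1)+1$ in the range $1\le d\le E+1$, with $\mu_d=0$ for $d>E+1$. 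Part~(1) is now immediate: $f$ is non-increasing, so $f(d)-f(d-1)\le 0$ and $\mu_d\le 1$ for every $d$ (and since $[1]_q=1$ is a unit, a possibly negative $\mu_1$ is harmless); moreover only the root attains $E(T^v)=E$, giving $f(E)=1$, $f(E+1)=0$, hence $\mu_{E+1}=0$ and $\mu_d=0$ for all $d>E$. Therefore $N(Q(T))$ is a product of distinct atoms $[d]_q$ with $d\le E$, which is exactly the assertion that $N(Q(T))$ divides $[E]_q!=[1]_q[2]_q\cdots[E]_q$ in the $q$-integer sense, and in particular that it is square free.

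For part~(2), write $T=T_1\vee\cdots\vee T_k$ with $k\ge 2$ and $E_i=|E(T_i)|$. From the formula, $[d]_q$ survives in $N(Q(T))$, i.e. $\mu_d=1$, exactly when $f(d-1)=f(d)$, which means that no vertex $v$ has $E(T^v)=d-1$. The structural input I will use is a gap in the multiset of subtree sizes: every non-root vertex lies in a single branch $T_i$, and its upward subtree $T^v$ omits at least the edge leading down to $v$ inside $T_i$, so $E(T^v)\le E_i-1\le \max_i E_i-1$, while the root alone has $E(T^{\mathrm{root}})=E>\max_i E_i$ (here $k\ge 2$ is used). Consequently no vertex has $E(T^v)$ in the interval $\{\max_i E_i,\dots,E-1\}$, so $f(d-1)=f(d)$ and $\mu_d=1$ for every $d$ with $\max_i E_i<d\le E$. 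This yields the claimed factors $[d]_q$, and taking $d=E$ together with part~(1) shows $[E]_q$ is the greatest $q$-integer occurring in $N(Q(T))$.

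The main obstacle is bookkeeping rather than depth: matching the denominator factorials $[E(T^v_j)]_q!$ to non-root vertices correctly, since the unit shift $E(T^v_j)=1+E(T^{w})$ is precisely what produces the decisive ``$+1$'' in $\mu_d=f(d)-f(d-1)+1$. One must also keep track that several vertices may share a subtree size, so that $\mu_d$ can be strictly negative for intermediate $d$; these values only enrich the denominator and never endanger square-freeness of the numerator. Finally, the gap argument in part~(2) must be phrased so as to cover non-reduced branches as well, where strings contribute non-root vertices of various subtree sizes inside one branch, all still bounded by $\max_i E_i-1$ and hence never landing in the gap.
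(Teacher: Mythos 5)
Your proof is correct, and while it starts from the same place as the paper --- the state product formula of Theorem \ref{Theorem 1.2}(3) --- the bookkeeping is genuinely different. The paper argues by a divisibility pairing: each numerator factorial $[E(T^{v_i})]_q!$ at a non-root branching vertex $v_i$ divides the denominator factorial $[E(T^{v_j}_t)]_q!$ at its closest branching ancestor $v_j$, so after cancellation only factors of $[E(T^{v_0})]_q!$ can survive in the numerator; part (2) is then handled by asserting that the factors $[d]_q$ with $\max_i E_i < d \le |E(T)|$ cannot be cancelled. You instead compute the exact net exponent of each atom, $\mu_d = f(d)-f(d-1)+1$ with $f(d)=\#\{v: E(T^v)\ge d\}$, using the bijection between branches $T^v_j$ and non-root vertices $w$ together with the shift $E(T^v_j)=1+E(T^w)$; I checked the reindexing and the treatment of the root's contribution, and both are right (as is your remark that $\mu_1\le 0$ is harmless since $[1]_q=1$). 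Your formula buys strictly more than the theorem states: it characterizes exactly which $[d]_q$ occur in $N(Q(T))$, namely those $2\le d\le |E(T)|$ for which no vertex satisfies $E(T^v)=d-1$, and both parts drop out at once --- part (1) from monotonicity of $f$, part (2) from the gap in subtree sizes between $\max_i E_i - 1$ and $|E(T)|$ when $k\ge 2$. It also sidesteps a point the paper leaves implicit, namely that the numerator-to-denominator pairing is injective (each branch of $v_j$ contains at most one branching vertex whose closest branching ancestor is $v_j$). Both arguments are valid; yours is longer to set up but more quantitative and self-checking.
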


\begin{proof}
$(1)$ Let $T$ be a rooted tree. We consider every vertex of degree $\geq 3,$ including the root if the degree of the root is $\geq 2,$ denoted by $v_{0},v_{1}, \ldots, v_{m}$ where $v_{0}$ is the root of the greatest subtree of $T$. Then
$$Q(T)=\prod_{v\in \{v_{0},v_{1},\ldots,v_{m}\}}\binom{E(T^v)}{E(T^v_{k_v}),...,E(T^v_{1})}_q$$
where $T^{v}$ is the subtree of $T$ with root $v$ and $k_{v_{i}}+1$ is the degree of $v_{i},$ especially if $v_{0}$ is the root of $T,$ then $k_{v_{0}}$ will be the degree of $v_{0}$ (compare with Theorem \ref{Theorem 1.2}$(3)$ ``State product formula"). Then in the reduced form $\widehat{Q}(T)$ of $Q(T),$ every $[E(T^{v_{i}})]_{q}!$ which is the numerator of $\binom{E(T^v)}{E(T^v_{k_v}),...,E(T^v_{1})}_q$ for $1 \leq i \leq m$ will be canceled out (because each subtree $T^{v_{i}}$ belongs to the next greater subtree $T^{v_{j}}$ therefore $E(T^{v_{i}}) \leq E(T^{v_{j}}_{t})$ for some $t \in \{1,\ldots, k_{v_{j}}\}$, i.e. $[E(T^{v_{i}})]_{q}!$ divides $[E(T^{v_{j}}_{t})]_{q}!$ in $q$-integer sense). Thus the numerator $N(Q(T))$ of $\widehat{Q}(T)$ has only some $q$-integer factors of $[E(T^{v_{0}})]_{q}!$ Clearly, $E(T^{v_{0}}) \leq |E(T)|$ therefore $N(Q(T))$ is composed of $q$-integer factors of $[|E(T)|]_{q}!$

$(2)$ Since $T=T_{1} \vee \cdots \vee T_{k}$ and $k \geq 2,$ the tree $T$ is reduced. Then by the similar argument in the proof of $(1),$ $q$-integer factors $[d]_{q}$ cannot be canceled if $|E(T)| \geq d > \max\limits_{1 \leq i\leq k}(E_{i}).$ Moreover, since $E(T^{v_{0}})=|E(T)|,$ the greatest $q$-integer in the numerator of the reduced form of $Q(T)$ is $|E(T)|.$
\end{proof}

\begin{example}
${8 \choose 4,4}_{q}{5 \choose 2,3}_{q}$ and ${5 \choose 2,3}_{q}{4 \choose 2,2}_{q}$ cannot be realized as a $q$-polynomial of some
rooted trees because ${8 \choose 4,4}_{q}{5 \choose 2,3}_{q}=\frac{[8]_{q}[7]_{q}[6]_{q}[5]_{q}^{2}}{[3]_{q}[2]_{q}^{2}}$
and ${5 \choose 2,3}_{q}{4 \choose 2,2}_{q}=\frac{[5]_{q}[4]_{q}^{2}[3]_{q}}{[2]_{q}^{2}}$ are in reduced forms.
Notice (Theorem \ref{Theorem 4.1}) that both $q$-polynomials are not strictly unimodal.
However, ${4k+1 \choose 3,4k-2}_q{4 \choose 2,2}_q$ for $k \geq 2$ can be realized as $Q(T)$ for some $T.$ (See Figure $3.2.$)
\end{example}

\begin{corollary}\label{Corollary 5.5} A product of $q$-integers $[a_1]_q[a_2]_q\cdots[a_k]_q$ where $a_1 \leq a_2 \leq ...\leq a_k$ can be realized as a $q$-polynomial of some rooted tree $Q(T)$ if and only if all inequality are strict, that is $a_1 < a_2 < ... < a_k.$
\end{corollary}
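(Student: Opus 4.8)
I would prove the two implications separately: the ``only if'' direction follows almost immediately from the structural results already established, while the ``if'' direction requires an explicit inductive construction of a tree. For ``only if'', suppose $[a_1]_q\cdots[a_k]_q=Q(T)$ for some rooted tree $T$. Since the left-hand side is written as a product of $q$-integers with empty denominator, it is vacuously a reduced-form representation, so by the uniqueness of the reduced form it is the reduced form of $Q(T)$; in particular $N(Q(T))=[a_1]_q\cdots[a_k]_q$ with trivial denominator. Theorem \ref{Theorem 5.3}$(1)$ asserts that $N(Q(T))$ is square free as a product of $q$-integers, forcing the factors $[a_i]_q$ to be pairwise distinct. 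Combined with $a_1\le\cdots\le a_k$, this yields the strict chain $a_1<a_2<\cdots<a_k$, so I expect this half to be essentially one line once the identification of $N(Q(T))$ is made.

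For ``if'' I would build $T$ inductively from two elementary moves, after discarding any factor $[1]_q=1$ so that all $a_i\ge 2$. The first move is that attaching a string below the root leaves $Q$ unchanged, which is immediate from the State product formula (Theorem \ref{Theorem 1.2}$(3)$), since every string vertex has $k_v=1$ and hence Boltzmann weight $1$. The second move is that wedging a single edge onto a tree $T'$ multiplies $Q$ by $[|E(T')|+1]_q$, because $\binom{|E(T')|+1}{|E(T')|,1}_q=[|E(T')|+1]_q$ by Theorem \ref{Theorem 1.2}$(1)$. I would then maintain trees $T_i$ with exactly $a_i$ edges and $Q(T_i)=[a_1]_q\cdots[a_i]_q$: starting from $T_1$ equal to a path of length $a_1-1$ wedged with a single edge (so $|E(T_1)|=a_1$ and $Q(T_1)=[a_1]_q$), I pass from $T_i$ to $T_{i+1}$ by first padding $T_i$ with a string of length $a_{i+1}-1-a_i$ to reach edge count $a_{i+1}-1$, and then wedging on a single edge, which multiplies $Q$ by $[a_{i+1}]_q$. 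After $k-1$ steps $T_k$ realizes the desired product.

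The one place where the hypothesis is genuinely used---and the only delicate point---is the padding step: inserting a string of nonnegative length requires $a_{i+1}-1\ge a_i$, that is $a_{i+1}>a_i$, which is precisely the strict inequality and precisely what fails when two of the $a_i$ coincide. I do not anticipate further obstacles; the main care lies in tracking edge counts through each wedge so that the $q$-multinomial coefficient collapses to a single $q$-integer, and in confirming that the construction yields a legitimate rooted tree.
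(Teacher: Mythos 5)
Your proof is correct and follows essentially the same route as the paper: the ``only if'' half is exactly the paper's appeal to Theorem \ref{Theorem 5.3}$(1)$ via square-freeness of the numerator, and your inductive ``pad with a string, then wedge a single edge'' construction is precisely the paper's step of attaching a tree of size $a_{i+1}-a_i$ at the bottom (Figure $5.2$). You simply make explicit the edge-count bookkeeping and the role of strictness that the paper leaves implicit.
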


\begin{proof}
(if) We proceed by induction on $k,$ the number of factors in the product. $[a_1]_q$ can be realized by a tree.
Then in the inductive step we attach at the bottom the tree of size $a_k- a_{k-1}$ as illustrated in the Figure $5.2$
for $[4]_q[5]_q[8]_q[10]_q [11]_q$.\\
(only if) This directly follows from Theorem \ref{Theorem 5.3}$(1)$.
\end{proof}

\begin{figure}[h]
\centerline{\psfig{figure=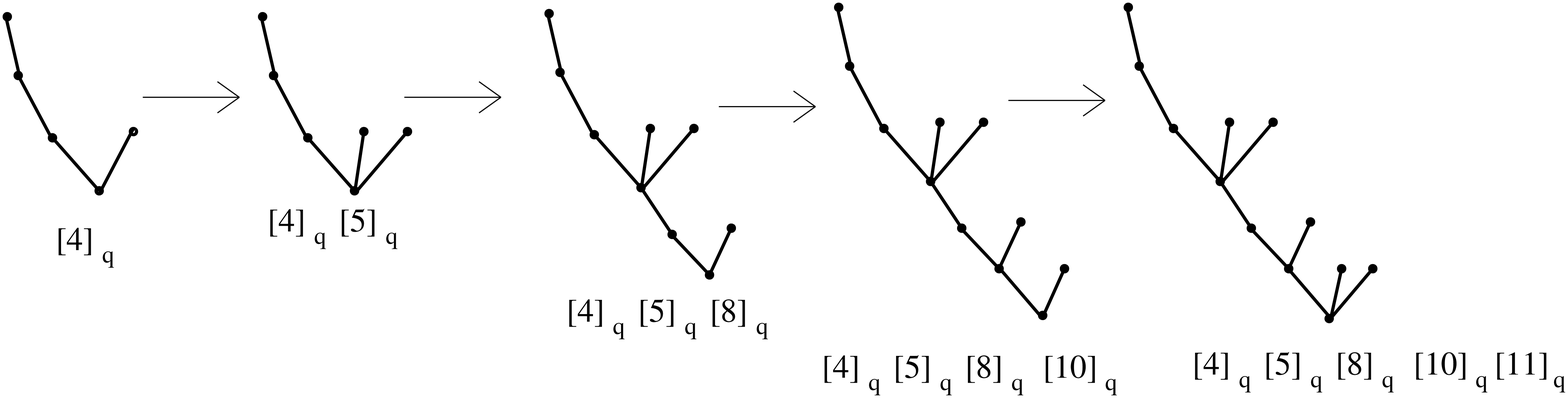,height=3.6cm}}\ \\
\centerline{Figure $5.2$: Realization of the polynomial $[4]_q[5]_q[8]_q[10]_q [11]_q$}
\end{figure}

As a generalization of Corollary 5.5, now we give a complete solution to the first characterization problem we asked in the beginning of this section. Recall that the plucking polynomial of any rooted tree can be written as the product of $q$-binomial coefficients.

\begin{theorem}\label{Theorem 5.6}
Consider a product of $q$-binomial coefficients:
$$P(q) = P_1(q)P_2(q)\cdots P_k(q), \mbox{ }  \mbox{ where } P_i(q) = {m_i+n_i \choose m_i,n_i}_q.$$
Then the product can be realized as $Q(T)$ for some rooted tree $T$ if and only if the numerator of $P(q)$ does not repeat any $q$-integer.
\end{theorem}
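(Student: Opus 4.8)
The plan is to prove the two implications separately. The forward direction is immediate: if $P(q)=Q(T)$, then by Theorem \ref{Theorem 5.3}$(1)$ the numerator $N(Q(T))$ divides $[|E(T)|]_q!$ in the $q$-integer sense and is therefore square free, so no $q$-integer is repeated in the numerator of $P(q)$. The whole content lies in the reverse direction, which I would handle by an explicit construction of a tree.

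First I would translate the hypothesis into a counting statement. Writing $P_i(q)=\binom{m_i+n_i}{m_i,n_i}_q=\frac{[m_i+n_i]_q!}{[m_i]_q![n_i]_q!}$, the net exponent of a fixed $q$-integer $[d]_q$ in the unreduced product is
$$e(d)=\#\{i: m_i+n_i\geq d\}-\#\{i: m_i\geq d\}-\#\{i: n_i\geq d\}.$$
Put $V(d)=\#\{i: m_i+n_i\geq d\}$ and $\mathrm{slot}(d)=\#\{i: m_i\geq d\}+\#\{i: n_i\geq d\}$, so that $e(d)=V(d)-\mathrm{slot}(d)$. The numerator repeats no $q$-integer precisely when $e(d)\leq 1$ for every $d\geq1$, that is, when $V(d)-1\leq\mathrm{slot}(d)$ for all $d$.

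Next I would realize $P(q)$ as a product of Boltzmann weights. To each factor $P_i$ I attach a degree-three branching vertex $v_i$ whose subtree $T^{v_i}$ has $m_i+n_i$ edges and splits into two child subtrees of sizes $m_i$ and $n_i$ (its two \emph{slots}); then $W(v_i)=\binom{m_i+n_i}{m_i,n_i}_q=P_i(q)$, so by the state product formula of Theorem \ref{Theorem 1.2}$(3)$ any tree assembled from the $v_i$ satisfies $Q(T)=\prod_iP_i(q)=P(q)$. To assemble them I process the vertices in decreasing order of size $m_i+n_i$, take the unique largest one as the root (uniqueness is the case $d=\max_i(m_i+n_i)$, where $\mathrm{slot}(d)=0$), and place each remaining $v_i$ into an as-yet-unused slot of size $\geq m_i+n_i$, inserting a string of the appropriate length so that the child subtree attains the exact slot size; every unused slot is completed to a bare path.

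The crux is that this placement never fails, and this is where I expect the main obstacle to lie. I would first note that a slot of size $\geq s$ can only belong to a vertex of size $>s$, since both parts of a size-$s$ vertex are at most $s-1$; hence every placement joins a strictly larger vertex to a strictly smaller one, which forbids cycles and, together with the unique maximal root, forces the result to be a single connected tree. Feasibility of the placement is then a saturating matching in the staircase bipartite graph in which $v_i$ may occupy any slot of size $\geq m_i+n_i$, and by the threshold form of Hall's theorem such a matching exists if and only if, for every $t$, the number $V(t)-1$ of non-root vertices of size $\geq t$ is at most the number $\mathrm{slot}(t)$ of slots of size $\geq t$. By the identity $e(d)=V(d)-\mathrm{slot}(d)$ this is exactly the square-free hypothesis, so the equivalence is forced; the remaining checks (acyclicity, connectedness, and that each weight equals $P_i$) are routine.
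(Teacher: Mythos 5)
Your proposal is correct and takes essentially the same route as the paper: the forward direction is Theorem \ref{Theorem 5.3}$(1)$, and for the converse the paper likewise builds an injection sending each non-root factor to an element of $B=\{m_1,n_1,\ldots,m_k,n_k\}$ of size at least $m_i+n_i$ (constructed greedily in decreasing order of $m_i+n_i$, with the failure of the greedy step contradicting square-freeness exactly as your Hall-type count $e(d)=V(d)-\mathrm{slot}(d)\le 1$ encodes), and then assembles the tree by the identical nesting-with-strings construction, completing unused slots to bare paths.
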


\begin{proof}
According to Theorem 5.3, it suffices to show that if the numerator of $P(q)$ does not repeat any $q$-integer, then there exists a rooted tree $T$ such that $Q(T)=P(q)$. Without loss of generality, we assume that $m_1+n_1\geq m_2+n_2\geq \cdots \geq m_k+n_k$. Denote
\begin{center}
$A=\{m_2+n_2, \cdots, m_k+n_k\}$ and $B=\{m_1, n_1, \cdots, m_k, n_k\}$.
\end{center}
Namely, $A$ is a set which contains $k-1$ integers and $B$ is a set which consists of $2k$ integers. If the numerator of $P(q)$ does not repeat any $q$-integer, we claim that there exists an injection $f$ from $A$ to $B$ such that $f(m_i+n_i)\geq m_i+n_i$, for all $2\leq i\leq k$. In fact we can define $f$ beginning with $m_2+n_2$, if $m_1\geq m_2+n_2$, then we define $f(m_2+n_2)=m_1$. Next let us consider $m_3+n_3$, if any one of $\{n_1, m_2, n_2\}$, say, $n_2$, is greater than or equal to $m_3+n_3$, then we define $f(m_3+n_3)=n_2$. The key point is this process will not stop until all elements of $A$ have been assigned an image under $f$. If not, we assume that after defining $f(m_2+n_2), \cdots, f(m_{i-1}+n_{i-1})$ we can not find the image of $m_i+n_i$ in $B-\{f(m_2+n_2), \cdots, f(m_{i-1}+n_{i-1})\}$. Note that $m_1+n_1\geq m_i+n_i$, hence $[m_1+n_1]_q!$ contains the $q$-integer $[m_i+n_i]_q$. Now since no integer in $B-\{f(m_2+n_2), \cdots, f(m_{i-1}+n_{i-1})\}$ is greater than or equal to $m_i+n_i$, it follows that the $q$-integer $[m_i+n_i]_q$ appears at least twice in the the numerator of $P(q)$. This contradicts with the assumption.

Now we explain how to construct a binary rooted tree $T$ with $Q(T)=P(q)$.  First we regard $T$ as the wedge product of $T_1$ and $T_2$, where $|E(T_1)|=m_1$ and $|E(T_2)|=n_1$. If $f(m_2+n_2)=m_1$, then $T_1$ can be described as the wedge product of $T_3$ and $T_4$ with a string consists of $m_1-(m_2+n_2)$ edges, where $|E(T_3)|=m_2$ and $|E(T_4)|=n_2$. With the help of $f$, we can construct $T$ step by step. In particular, if some integer, say $n_1$, is not the image of any element in $A$ under $f$, then $T_2$ is a straight line with $n_1$ edges. In this way finally we can obtain a binary tree $T$ such that $Q(T)=P(q)$. See Figure $5.3$ for a simple example.
\end{proof}

\begin{figure}[h]
\centerline{\psfig{figure=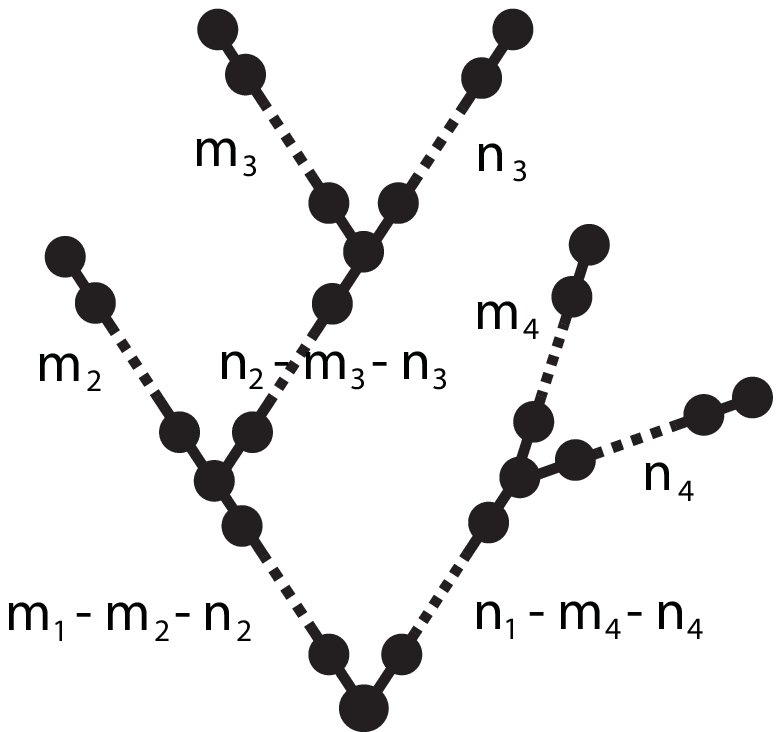,height=8cm}}\ \\ \ \\
\centerline{Figure $5.3$: The realization of $P(q)=\prod\limits_{i=1}^4{m_i+n_i \choose m_i, n_i}_q$ with a given $f$}
\centerline{satisfying $f(m_{2}+n_{2})=m_{1},f(m_{3}+n_{3})=n_{2}, f(m_{4}+n_{4})=n_{1}$}
\end{figure}

\subsection{Uniqueness}

We have an infinite number of different rooted trees with the same reduced form (i.e. string can be of any length) therefore with the same plucking polynomial. However, for reduced rooted trees we have:

\begin{proposition}
For reduced rooted trees, the function $T \mapsto Q(T)$ is finite-to-one.
\end{proposition}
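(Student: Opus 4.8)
The plan is to reduce the statement to a single quantitative claim: for every nontrivial reduced rooted tree $T$ one has $|E(T)| \le 2\deg Q(T)$. Since $\deg Q(T)$ is read off directly from the polynomial $Q(T)$, and since there are only finitely many rooted trees with at most a prescribed number of edges, such a bound shows at once that each fiber of the map $T \mapsto Q(T)$, restricted to reduced trees, is finite. The one-vertex tree is the unique reduced tree with $Q(T)=1$ (every nontrivial reduced tree has a root of degree $\ge 2$, hence branching number $c_1 \ge 1$), so it is handled separately and trivially.

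To establish the edge bound I would work with the State product formula (Theorem \ref{Theorem 1.2}(3)), which writes $Q(T) = \prod_{v \in V(T)} W(v)$ with each $W(v)$ a $q$-multinomial coefficient. Writing $N := \deg Q(T)$ and using that the degree of a product of polynomials is the sum of the degrees, we get $N = \sum_{v} \deg W(v)$, and in particular $N \ge \deg W(v_0)$ for the root $v_0$. A direct computation with $q$-factorials gives
$$\deg \binom{e_1 + \cdots + e_{k}}{e_1, \ldots, e_k}_q = \sum_{i<j} e_i e_j,$$
where $e_i = |E(T^{v_0}_i)|$ are the edge counts of the subtrees in the wedge decomposition $T = T^{v_0}_{k} \vee \cdots \vee T^{v_0}_1$ at the root.

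The key point is that $T$ being reduced and nontrivial forces $k = k_{v_0} \ge 2$, so the list $e_1 \ge e_2 \ge \cdots \ge e_k$ has at least two entries, each with $e_i \ge 1$ (every such subtree contains the edge leaving $v_0$). From $e_1 e_2 \le \sum_{i<j} e_i e_j \le N$ together with $e_2 \ge 1$ I obtain $e_1 \le N$, and from
$$\sum_{i<j} e_i e_j \ge \sum_{j=2}^{k} e_1 e_j \ge \sum_{j=2}^{k} e_j = |E(T)| - e_1$$
I obtain $|E(T)| \le N + e_1 \le 2N$, as desired. No induction over the tree is needed: the inequality at the root alone suffices.

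I expect the only real obstacle to be conceptual rather than computational. One must notice that vertices with a single child ($k_v = 1$), which populate the internal strings of $T$ and contribute trivial factors $W(v) = 1$, do \emph{not} make $Q(T)$ blind to the size of $T$: the lengths of those strings are recorded in the edge counts $e_i$ of the branching vertex lying below them, and it is precisely these edge counts that are controlled by $\deg W(v_0)$. Once this is recognized, the argument is immediate. I would close by remarking that the bound $|E(T)| \le 2\deg Q(T)$ is sharp, as witnessed by $T_{1,1}$ (a root joined to two leaves), for which $Q(T) = [2]_q$ has degree $1$ and $|E(T)| = 2$; this also confirms that finite-to-one cannot be strengthened to a uniform bound on fiber sizes.
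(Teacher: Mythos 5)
Your proof is correct, but it takes a genuinely different route from the paper's. The paper deduces the proposition in one line from its Theorem \ref{Theorem 5.3}(2): for a reduced tree $T$ written as a wedge of $k\geq 2$ subtrees, the greatest $q$-integer surviving in the numerator of the reduced fraction form of $Q(T)$ is exactly $[|E(T)|]_q$, so $Q(T)$ determines $|E(T)|$ on the nose, and finiteness of each fiber is immediate. You instead extract only the inequality $|E(T)|\leq 2\deg Q(T)$ from the state product formula, by computing $\deg W(v_0)=\sum_{i<j}e_ie_j$ at the root and exploiting that reducedness forces $k_{v_0}\geq 2$ with all $e_i\geq 1$; the two estimates $e_1\leq e_1e_2\leq N$ and $|E(T)|-e_1\leq\sum_{j\geq 2}e_1e_j\leq N$ are both correct, as is your degree formula for the $q$-multinomial and your separate treatment of the one-vertex tree. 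This is weaker information than the paper obtains (a factor-of-two bound rather than exact recovery of the edge count), but it is entirely self-contained: it requires no analysis of cancellation between $N(Q(T))$ and $D(Q(T))$, only a degree count at the root. The paper's route buys the sharper statement that $|E(T)|$ is itself an invariant of $Q(T)$ (which it has already proved and reuses elsewhere), while yours buys independence from Theorem \ref{Theorem 5.3} at the cost of a bound that, as your example $T_{1,1}$ shows, is sharp and is all one needs for finite-to-one.
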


\begin{proof}
By Theorem \ref{Theorem 5.3}$(2)$ the polynomial $Q(T)$ determines the number of edges of a reduced rooted tree $T.$ Because we can have only a finite number of trees with given number of edges, the proposition follows.
\end{proof}

There are many examples of different reduced rooted trees of the same plucking polynomial.
The simplest one is of five edges as in Figure $1.1$ (the trees differ by changing a root\footnote{For a balanced tree $T={\psfig{figure=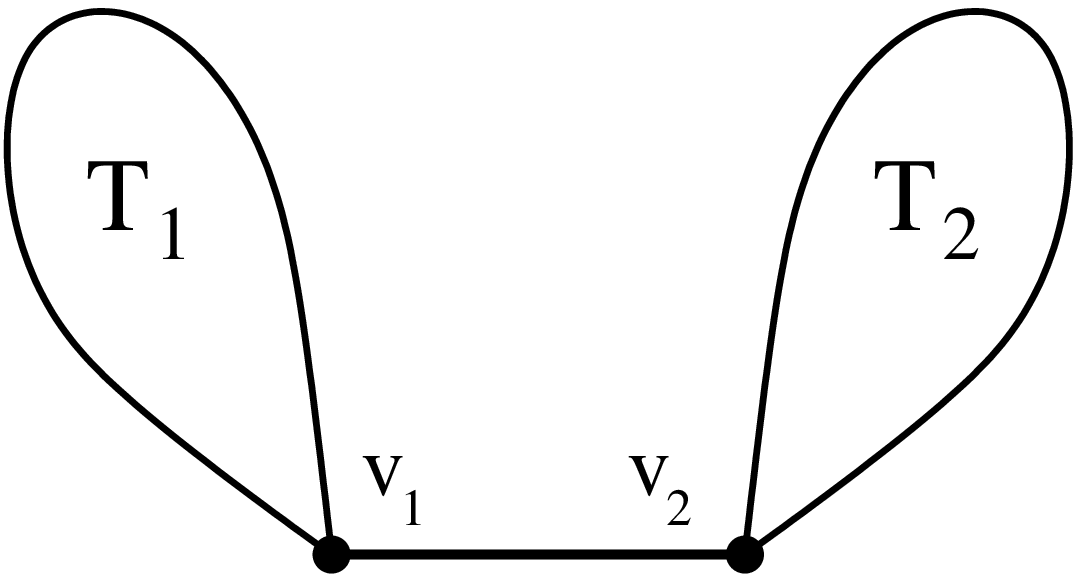,height=1.0cm}}$ where $|E(T_1)|=|E(T_2)|$ we can move the root from $v_1$ to $v_2$ without changing the
plucking polynomial $Q(T)$, compare \cite{Prz-3}.}). Furthermore, for any $n,$ we can construct $n$ different trees with the same plucking polynomial:

\begin{example}
The product of $q$-integers $([4]_q [5]_q) ([7]_q [8]_q) \cdots ([4+3(n-2)]_q [5+3(n-2)]_q)$ can be realized as $n$ different rooted trees (See Figure $5.4$ for the case of $n=5$).
\end{example}

\begin{figure}[h]
\centerline{\psfig{figure=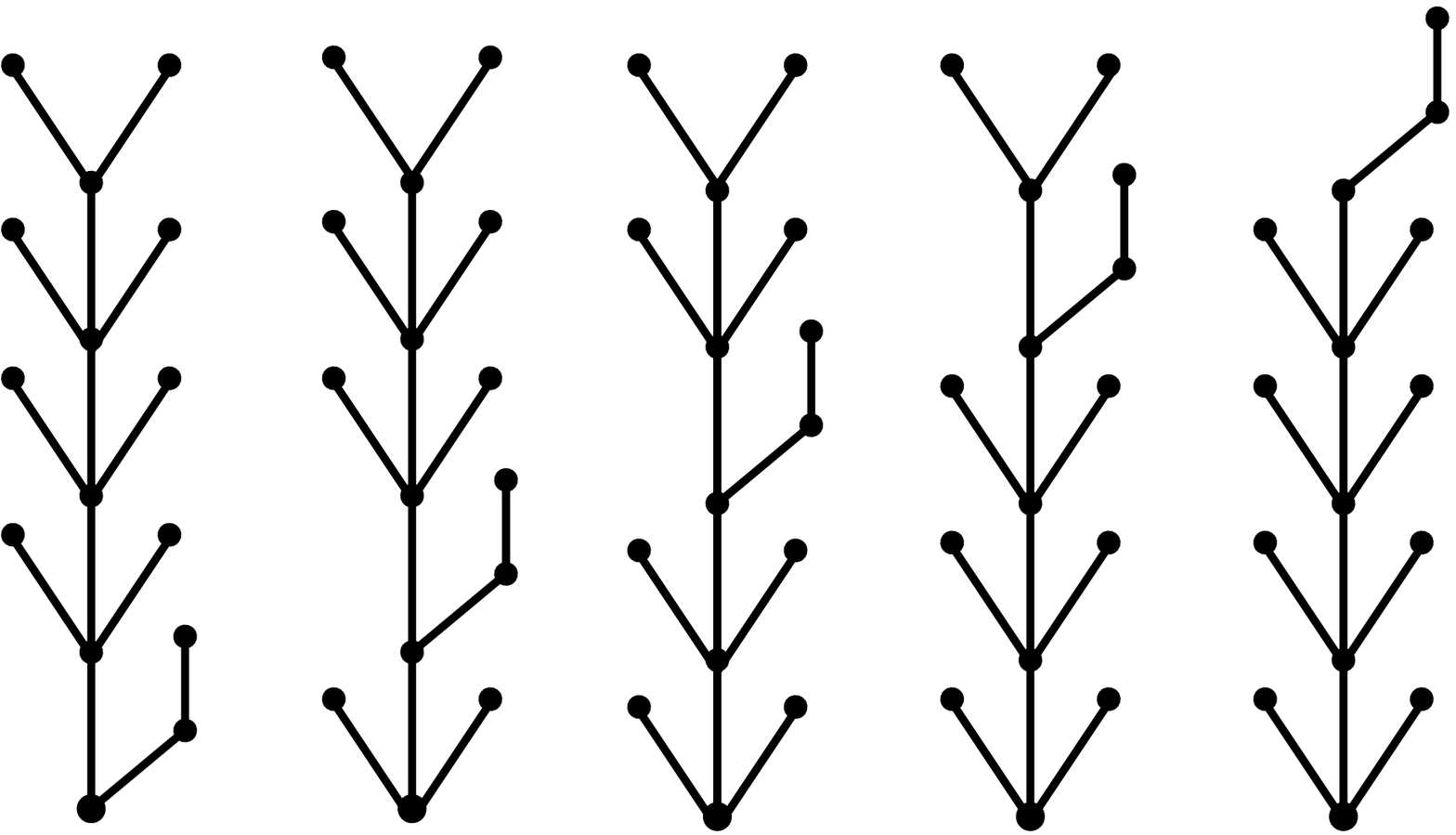,height=5.2cm}}\ \\
\centerline{Figure $5.4$: The realizations of $[4]_q[5]_q[7]_q[8]_q[10]_q[11]_q[13]_q[14]_q$}
\end{figure}

As we have seen in Figure $5.4$, in general the realization of $[a_1]_q[a_2]_q\cdots[a_k]_q$ is not unique.
However, it is easy to conclude that for one $q$-integer $[a_1]_q$, the realization is unique. For $[a_1]_q[a_2]_q$, the realization
is unique if and only if $(i)$ $a_2\geq a_1+2,$ $(ii)$ $a_1=2, a_2=3,$ or $(iii)$ $a_1=3, a_2=4$. When $a_2=a_1+1\geq5$, there are only two distinct
realizations for $[a_1]_q[a_2]_q$, and these two distinct rooted trees are isomorphic as (un-rooted) trees. More generally, we have the following result.

\begin{proposition}\label{Proposition 5.8}

Assume that $a_1\leq a_2\leq\cdots\leq a_k$, if $a_{i+1}-a_i\geq2$, then there exists only one rooted tree $T$ with $Q(T)=[a_1]_q[a_2]_q\cdots[a_k]_q$.

\end{proposition}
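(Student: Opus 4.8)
The plan is to prove uniqueness by strong induction on $k$, using Theorem \ref{Theorem 5.3} to pin down the coarse structure of any realizing tree and then peeling off the largest $q$-integer factor. The key structural fact supplied by Theorem \ref{Theorem 5.3}$(2)$ is that when a reduced tree is a nontrivial wedge $T=T_1\vee\cdots\vee T_r$, the greatest $q$-integer appearing in $N(Q(T))$ equals $|E(T)|=a_k$, and moreover the factor $[a_k]_q$ cannot be cancelled. So for any realizing tree $T$ of $[a_1]_q\cdots[a_k]_q$, its reduction must have exactly $a_k$ edges, and the top splitting vertex contributes a $q$-multinomial whose numerator supplies $[a_k]_q$.

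First I would reduce to reduced trees: a string does not change $Q(T)$, so it suffices to show the \emph{reduced} realizing tree is unique, and then observe that for a product of $q$-integers with all gaps $\geq 2$ the realization is automatically reduced (if the root had degree $1$, cutting the string would leave a tree of fewer edges still realizing the same polynomial, contradicting that $a_k=|E(T)|$ is forced). Next I would analyze the top wedge decomposition $T=T_1\vee\cdots\vee T_r$ of the reduced tree. By Theorem \ref{Theorem 1.2}$(3)$ the weight of the root is the $q$-multinomial $\binom{a_k}{|E(T_1)|,\dots,|E(T_r)|}_q$, and the remaining factors come from the subtrees. The crucial point is that the \emph{branching number} $c_1$ (Theorem \ref{Theorem 1.2}$(4)(iv)$) of a product of distinct $q$-integers $[a_1]_q\cdots[a_k]_q$ equals $k-1$: each factor $[a_i]_q$ contributes a single linear term, so $c_1=k-1$, which by the formula $c_1=\sum_v(k_v-1)$ forces the tree to be \emph{binary} with exactly $k-1$ branch vertices. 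Thus $r=2$ at the root, and $T=T_1\vee T_2$ with $|E(T_1)|+|E(T_2)|=a_k$.

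The heart of the argument is showing that the partition $a_k=|E(T_1)|+|E(T_2)|$ is forced and that one branch is a straight line. Writing $Q(T)=\binom{a_k}{p,\,a_k-p}_q Q(T_1)Q(T_2)$ with $p=|E(T_1)|$, the numerator of the $q$-binomial is $[a_k]_q!$, and after cancellation the total numerator must be exactly $\{[a_1]_q,\dots,[a_k]_q\}$ with no repeats. I would argue that the gap hypothesis $a_k-a_{k-1}\geq 2$ forces one of $T_1,T_2$ to be a single string of $a_k-a_{k-1}$ edges whose own polynomial is trivial, so that the remaining $q$-integers $[a_1]_q\cdots[a_{k-1}]_q$ (with $a_{k-1}$ now the new top, and all surviving gaps still $\geq 2$) are realized by the other branch after reduction. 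More precisely, the distinctness-plus-gap condition guarantees that the only way $\binom{a_k}{p,a_k-p}_q$ can contribute no extra repeated factors is for $\binom{a_k}{p,a_k-p}_q$ itself to collapse to a bare $q$-integer $[a_k]_q/(\text{cancellation})$, which happens exactly when $\min(p,a_k-p)=0$ after accounting for the string — pinning down $p$ uniquely. Applying the induction hypothesis to $[a_1]_q\cdots[a_{k-1}]_q$ then yields a unique subtree, and reattaching the forced string gives a unique $T$.

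I expect the main obstacle to be the combinatorial bookkeeping in this last step: verifying rigorously that the gap condition $a_{i+1}-a_i\geq 2$ (together with no repeated $q$-integer in the numerator, guaranteed here by strict increase) leaves \emph{no} alternative wedge decomposition at the top — i.e. ruling out that a nontrivial split $T_1\vee T_2$ with both $|E(T_i)|\geq 2$ could reproduce the same multiset of numerator $q$-integers. This amounts to showing that any genuine $q$-binomial $\binom{m+n}{m,n}_q$ with $m,n\geq 2$ injects new numerator factors $[d]_q$ for $\max(m,n)<d\leq m+n$ (Theorem \ref{Theorem 5.3}$(2)$ again), at least one of which would either duplicate an existing $a_i$ or create a forbidden gap-$1$ pair among the resulting factors, contradicting the hypotheses. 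Once this injectivity-of-the-top-splitting claim is established, the induction closes cleanly.
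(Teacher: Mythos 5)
Your overall plan (peel off the largest $q$-integer and induct) is the paper's, but the step you use to force the tree to be binary is wrong. First, the branching number of $[a_1]_q\cdots[a_k]_q$ is $c_1=k$, not $k-1$: each of the $k$ factors contributes one linear term, and the paper's own example $Q(T)=[4]_q[5]_q$ with $c_1=2$ confirms this. More seriously, even the correct value of $c_1$ cannot force binarity, since $\sum_v(k_v-1)=k$ is equally consistent with a single vertex of degree $k+1$ as with $k$ binary branch vertices; concretely, $\binom{3}{1,1,1}_q=[2]_q[3]_q$ realizes a product of two $q$-integers by one ternary splitting. (That example does not contradict the proposition, because the gap condition fails there, but it does show the branching number cannot pin down $r=2$ at the root.) This step should be discarded.

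What you postpone in your last paragraph as ``the main obstacle'' is in fact the entire proof, and it closes exactly along the lines you sketch --- this is precisely the paper's argument. Since $[a_1]_q\cdots[a_k]_q$ has distinct factors and trivial denominator, it is its own reduced form, so the reduced numerator of $Q(T)$ must be exactly this product. If the reduced tree splits as $T_1\vee\cdots\vee T_r$ with $\max_i E_i\leq |E(T)|-2$ (which happens whenever $r\geq 3$, or $r=2$ with both branches of size at least $2$), then Theorem~\ref{Theorem 5.3}$(2)$ forces the uncancellable factor $[a_k-1]_q$ into the numerator, and $a_k-1\notin\{a_1,\ldots,a_k\}$ precisely because $a_k-a_{k-1}\geq 2$. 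Hence the top splitting must be $T_1\vee T_2$ with $|E(T_1)|=1$, so $Q(T_2)=[a_1]_q\cdots[a_{k-1}]_q$ and the string length down to the next branch vertex is forced to be $a_k-1-a_{k-1}$; induction then applies. So the repair is to promote your closing remark to the actual argument, drop the $c_1$ detour, and correct ``$\min(p,a_k-p)=0$'' to $\min(p,a_k-p)=1$.
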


\begin{proof}
First we claim that if we write $T$ in the form of a wedge product $T_1\vee\cdots\vee T_n$, then $n=2$ and $|E(T_1)|=1$. By Theorem \ref{Theorem 5.3}$(2)$ the numerator of the reduced form of $Q(T)$ contains $[a_k]_q$ and $[a_k-1]_q.$ This contradicts the assumption that $[a_k]_q - [a_k-1]_q \geq 2.$ Now let us consider $T_2$, beginning with the root. Let $v$ be the next vertex with degree $\geq 3$ (which exists if $k \geq 2$). Due to a similar reason, we have $T^v=T^v_1\vee T^v_2$ and $|E(T^v_1)|=1$. Continuing the discussion we will obtain the unique realization of $[a_1]_q[a_2]_q\cdots[a_k]_q$.

\end{proof}

We remark that the converse of Proposition \ref{Proposition 5.8} does not hold in general. For example, the realization of  $[2]_q[3]_q$ or $[2]_q[4]_q[5]_q$ is unique.

We end this paper with a problem.

\begin{problem}\label{Problem 5.11}
Find a family of moves on rooted trees that preserve plucking polynomial such that if two trees have the same plucking polynomial,
then they are related by a finite number of moves.
\end{problem}

An example of an elementary move is illustrated in Figure $5.5$ where we modify a tree by exchanging $T_{1}$ and $T_{2}$ where $|E(T_{1})|=|E(T_{2})|.$ Maybe this move solves the problem above.

\begin{figure}[h]
\centerline{\psfig{figure=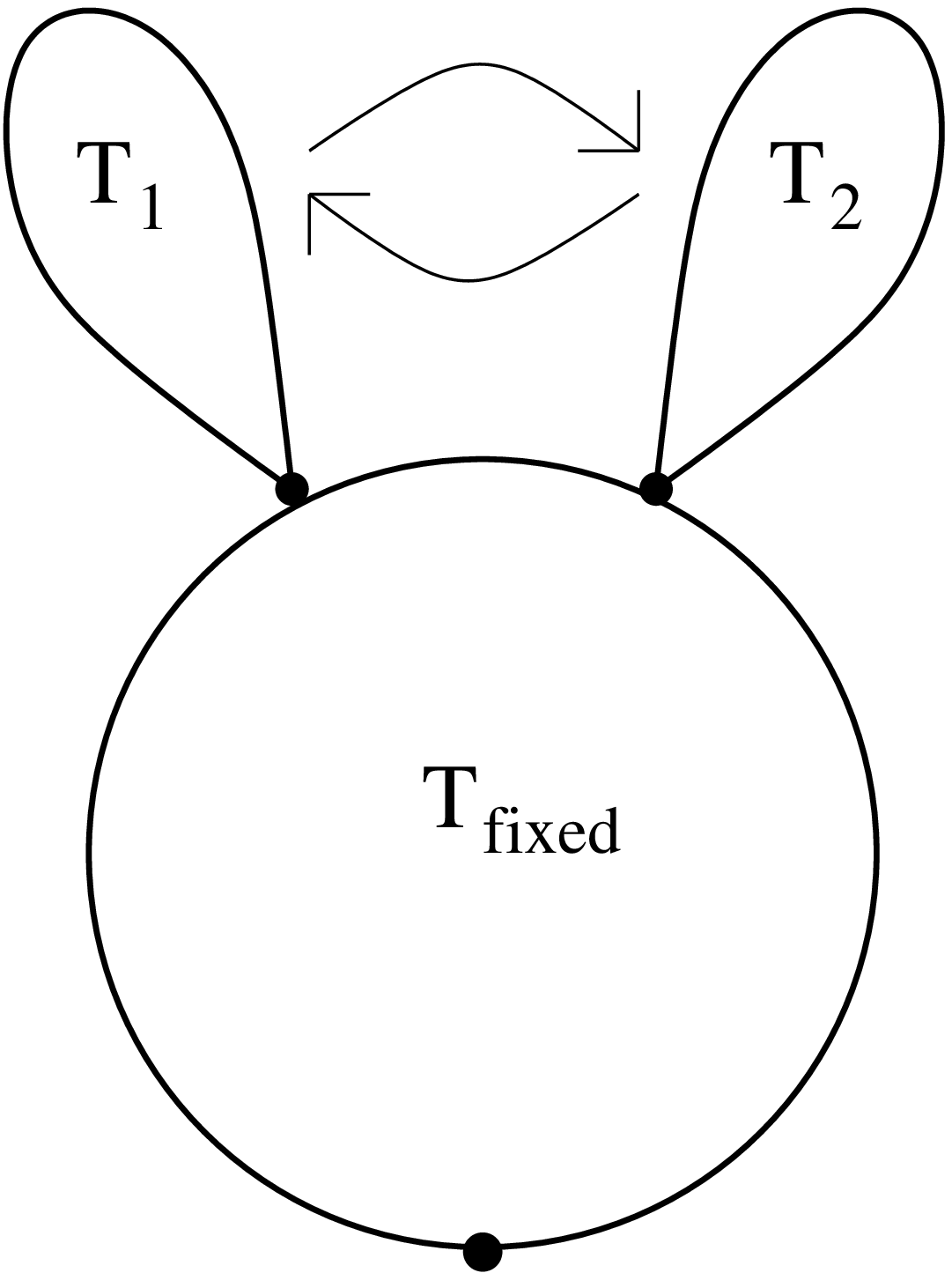,height=5.2cm}}\ \\
\centerline{Figure $5.5$: An exchange move}
\end{figure}

\section{Acknowledgements}\ \\
We would like to thank Jeremy Siegert for helping the authors edit this paper.\\
Z.~Cheng was supported by NSFC 11301028 and NSFC 11571038.\\
S.~Mukherjee was supported by Presidential Merit Fellowship of GWU.\\
J.~H.~Przytycki was partially supported by the  Simons Foundation Collaboration Grant for Mathematicians--316446.\\
X.~Wang was supported by the GWU fellowship.\\
S.~Y.~Yang was supported by the GWU fellowship.

 \end{document}